\documentclass[12pt]{article}
\usepackage{amsmath,amsthm,amssymb}
\usepackage[english]{babel} 
\usepackage{array}
\usepackage{rotating}
\usepackage{comment}
\usepackage{graphicx}
\usepackage{pdflscape}
\usepackage{url}
\usepackage[latin1]{inputenc}
\usepackage{tikz}
\newcommand\indicator{{\mathbb I}}
\newcommand\normal{{\cal N}}
\newcommand\Poi{{\rm Poi}}

\newcommand\convD{{\buildrel {\cal D} \over \longrightarrow}}
\newcommand\inL{\buildrel {{\cal L}_1} \over  \longrightarrow}
\newcommand\OL{O_{{\cal L}_1}}
\newcommand\given{\, \vert \, }
\newcommand\E{{\mathbb E}}

\newcommand\V{{\mathbb V{\rm ar}}}
\newcommand\Cov{{\mathbb C{\rm ov}}}
\newtheorem{theorem}{Theorem}[section]
\newtheorem{corollary}{Corollary}[section]
\newtheorem{lemma}{Lemma}[section]
\newtheorem{proposition}{Proposition}[section]

\newtheorem{remark}{Remark}[section]

\newcommand\inprob{\buildrel {\cal P} \over  \longrightarrow}

\newcommand\Prob{{\mathbb P}}
\newcommand\field{{\mathbb F}^{(\theta)}}
\begin{document}  
\begin{center}
{\huge \bf The containment profile of hyperrecursive trees}

\bigskip
\bigskip
{\large \bf Joshua Sparks\footnote{Department of Statistics,
The George Washington University,
Washington, D.C. 20052, U.S.A. }\qquad  Srinivasan Balaji\footnote{Department of Statistics,
The George Washington University,
Washington, D.C. 20052, U.S.A. }

\medskip
  Hosam Mahmoud\footnote{Department of Statistics,
The George Washington University,
Washington, D.C. 20052, U.S.A. }}

\medskip
\bigskip
\today
\end{center}

\bigskip\noindent
{\bf Abstract}
We investigate vertex levels of containment in a random hypergraph grown in the spirit of a recursive tree. We consider a local profile tracking the evolution of the 
containment of a particular vertex over time, and a global profile concerned about counts of the number of vertices of a particular containment level.

For the local containment profile, we obtain the exact
mean, variance and probability distribution 
in terms of standard combinatorial quantities like generalized harmonic numbers and Stirling numbers of the first kind. 
Asymptotically, we observe phases: the early vertices have an asymptotically normal distribution, intermediate vertices have a Poisson distribution, and late vertices have a degenerate distribution.

As for the global containment profile, we establish an asymptotically normal  distribution for the number of vertices at the  smallest containment  
level as
well as their covariances with  
the number of vertices at the  second smallest containment level and the variances of these numbers. The orders in the variance-covariance
matrix establish concentration laws. 

\bigskip
\noindent{\bf AMS subject classifications:} Primary:
    05082, 
   90B15; 
   Secondary: 60C05,     
      60F05.     

\medskip\noindent
{\bf Keywords:} Hypergraph, limit law, phase transition, martingale.

\bigskip
\section{Hyperrecursive trees}
Hypergraphs are generalizations of graphs. In a hypergraph, we have vertices and hyperedges consisting of collections of vertices. Also, the recursive tree is a well-studied structure; see~\cite{Drmota,Hofri,Karonski, Smythe}, among many other sources.
We propose in this paper a generalization of recursive trees to become 
hypergraphs.

A  hyperrecursive tree with parameter (hyperedge size) $\theta$ grows as follows. 
Initially, there are $\theta$ originating vertices, 
all labeled with 0.  These $\theta$ vertices constitute the first hyperedge. 
At each subsequent step, a vertex is added to the structure. The incoming 
vertex chooses 
$\theta-1$ existing vertices to co-share a hyperedge. 
A vertex joining at time~$n$ is labeled with $n$.
The choice of the vertices for the new hyperedge is done uniformly at random,
with all subsets of vertices of size $\theta-1$ being equally likely. The usual recursive tree is one with the parameter
$\theta = 2$.

Figure~\ref{Fig:hyper} illustrates the growth of a hyperrecursive tree in two steps (i.e., at time $n=2$). In this example, $\theta = 3$, and the hyperedge
appearing at step~$i$ is labeled $e_i$.
\begin{remark}
For $\theta \ge 3$, the hyperrecursive tree is not a tree at all.
We call such a hypergraph by this name only to preserve 
the historic origin
of these structures and frame them as a generalization of genuine recursive trees
($\theta=2$).
\end{remark}
\bigskip
\begin{figure}[thb]
\begin{center}
\begin{tikzpicture}[scale=0.6]

\node[draw=white] at (-10.9, 10) {$e_0$};
\node[draw=white] at (-2.9, 10) {$e_0$};
\node[draw=white] at (5.1, 10) {$e_0$};
\node[draw=white] at (7.25, 6.5) {$e_1$};
\node[draw=white] at (-0.75, 6.5) {$e_1$};
\node[draw=white] at (10.8, 8.1) {$e_2$};

\node[draw=white] at (-8, 10) {$0$};
\node[draw=white] at (-9.5, 10) {$0$};
\node[draw=white] at (-6.5, 10) {$0$};

\draw [ultra thick] (-8,10) ellipse (2.5 and 0.8);
\draw [thick] (-9.5,10) circle [radius=0.4];
\draw [thick] (-8,10) circle [radius=0.4];
\draw [thick] (-6.5,10) circle [radius=0.4];

\node[draw=white] at (0, 10) {$0$};
\node[draw=white] at (-1.5, 10) {$0$};
\node[draw=white] at (1.5, 10) {$0$};
\node[draw=white] at (-0.75, 8) {$1$};
\draw [ultra thick] (0,10) ellipse (2.5 and 0.8);
\draw [ultra thick] (-0.75,9.5) ellipse (1.3 and 2.5);
\draw [thick] (-1.5,10) circle [radius=0.4];
\draw [thick] (0,10) circle [radius=0.4];
\draw [thick] (1.5,10) circle [radius=0.4];
\draw [thick] (-0.75,8) circle [radius=0.4];

\node[draw=white] at (8, 10) {$0$};
\node[draw=white] at (6.5, 10) {$0$};
\node[draw=white] at (9.5, 10) {$0$};
\node[draw=white] at (7.25, 8) {$1$};
\node[draw=white] at (9.25,7.5) {$2$};
\draw [ultra thick] (8,10) ellipse (2.5 and 0.8);
\draw [ultra thick] (7.25,9.5) ellipse (1.3 and 2.5);
\draw [thick] (6.5,10) circle [radius=0.4];
\draw [thick] (8,10) circle [radius=0.4];
\draw [thick] (9.5,10) circle [radius=0.4];
\draw [thick] (7.25,8) circle [radius=0.4];
\draw [thick] (9.25,7.5) circle [radius=0.4];
\draw[ultra thick] (8.7, 8.7) .. controls (9.3, 12.2)
and (10.5, 12.2) .. (10.297, 6.18);

\draw[ultra thick] (8.72, 8.736) .. controls (5.3, 8.5)
and (5.3, 8) .. (10.338, 6.2);

\end{tikzpicture}
\end{center}
  \caption{A hyperrecursive tree grown in two steps ($\theta =3$).}
  \label{Fig:hyper}
\end{figure}
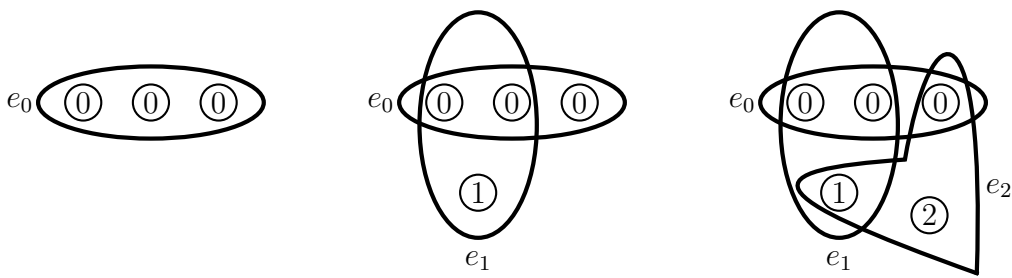
\section{Scope}
Our interest is in a local profile of the {\em level of containment}
for a vertex, which is the number of hyperedges containing it.
Let $C_{n,k}^{(\theta)}$ be the number of hyperedges containing vertex $k\ge 1$ at time $n \ge 0$.
For instance, in the hyperrecursive tree in Figure~\ref{Fig:hyper}, at $n=2$,
the vertex labeled 1 
is at containment level 2, while  the vertex labeled 2 
is at containment level 1, and so  
$C_{2,1}^{(3)} =2$ and $C_{2,2}^{(3)} = 1$.
Each of the originators (all labeled with 0) may evolve differently and have
different levels of containment at time $n$. However, the levels of these originators
have the same distribution at time~$n$, so we can choose a representative
among them to include the case $k=0$. For example,    
the representative of the originators in Figure~\ref{Fig:hyper} may be taken to be the rightmost vertex in the sole hyperedge at time 0. In the series
of hypergraphs shown, the representative experiences the evolution
$C_{0,0}^{(3)} =1$,
$C_{1,0}^{(3)} =1$, and
$C_{2,0}^{(3)} =2$.

We later see that a vertex labeled $k = k(n)$ has 
an asymptotic distribution depending on the relation between
$k$ and $n$. Asymptotically,
earlier vertices have Gaussian distributions and later vertices
have shifted Poisson distributions. The notions of ``early'' and ``late'' are 
made precise in the sequel, with nuances obtained from refinements into very early, early, intermediate,
late, and very late vertices. We also discuss how the phases merge at the seam
lines.  

Moreover, we investigate a global profile of containment. Let 
$X_{n,i}^{(\theta)}$ be the number of vertices at containment level $i$, that is, the number of vertices contained in $i$ edges
after $n$ evolutionary
steps. 
For instance, in the example shown in Figure~\ref{Fig:hyper},
we have $X_{2,1}^{(3)} = 1$, 
$X_{2,2}^{(3)} = 4$, and $X_{2,i} ^{(3)} = 0$, for $i\ge 3$.
To get a glimpse into the interaction between different levels
of containment, we compute the variance-covariance matrix 
of $(X_{n,1}^{(\theta)}, X_{n,2}^{(\theta)})$. Furthermore,  
we develop 
a Gaussian distribution for
$X_{n,1}^{(\theta)}$.
\section{Notation}
\label{Sec:notation}
Let $\tau_n^{(\theta)}$ be the cardinality
 of the vertex set ({\em size}) of the hyperrecursive tree at time~$n$
(right after the insertion of vertex $n$).
We sometimes refer to $n$ as the {\em age} of the hyperrecursive tree. 
Note that
\begin{equation}
\tau_n^{(\theta)} =  n + \theta.
\label{Eq:total}
\end{equation}
The exact results are represented in terms of Pochhammer's symbol for the rising factorial. The $m$-times rising factorial of a real number $x$ is
$$\langle x \rangle_m = x(x+1) \ldots (x+m-1),$$
with the interpretation that $\langle x \rangle_0=1$. We make use of such an expression in its form as a generating function of
the signless Stirling numbers of the first kind, which is namely
\begin{equation}
\langle x \rangle_m = \sum_{i=1}^m \Big[ {m \atop i}\Big] x^i,
\label{Eq:Stirling} 
\end{equation}
where $\big[ {m \atop i}\big]$ is the  $i^{\textnormal{th}}$ (signless) Stirling number of order $m$ of the first kind, a count of the number of permuations of
$\{1, 2, \ldots, m\}$ that have $i$ cycles.   
For properties of Stirling numbers, we refer the reader   
to~\cite{Barton,Graham}.

Average values and variances contain generalized harmonic numbers. These are
$H_n^{(s)} (x)$ $= \sum_{k=1}^n 1/(k+x)^s$, for integer $n\ge 0$ and real $s,x\ge 0$. 
The superscript is often dropped when it is 1;\footnote{The number $H_n^{(1)}(0)$ is 
often written as $H_n$.} we follow this convention.
It is well known that, for any fixed $x$, as  $n \to\infty$, we have
\begin{align}
H_n(x) &= H_n^{(1)}(x) = \ln (n) - \psi(x) - \frac 1 x+O\Big(\frac 1 n\Big);
           \label{Eq:harmonic1}\\ 
H_n^{(s)}(x) &= O(1) \,\, \textnormal{for} \,\, s>1,
\label{Eq:harmonic2}
\end{align}
where $\psi(.)$ is the digamma function. Note that $- \psi(x) - \frac 1 x$ converges to Euler's constant $\gamma$, as $x\to 0$.

In the asymptotic analysis, we utilize the
Stirling approximation of the ratio of growing Gamma functions, as detailed in \cite{Tricomi}. Namely, for fixed~$a$ and $b$ in $\mathbb R$,
we have
\begin{equation}
\frac{\Gamma(x+a)} {\Gamma(x+b)} \sim x^{a-b} \Big(1 +\frac{(a-b)(a+b-1)}{2x}+ O\Big(\frac 1 {x^2} \Big)\Big), \qquad \mbox {as \ } x \to\infty.
\label{Eq:Stirlingapprox}
\end{equation}
This approximation is applicable, even if $a = a(x)$ and $b =b(x)$ 
grow slowly with $x$. 

The sample taken at time $n$ is drawn without replacement, and so the
number of vertices in it at containment levels $1, \ldots, k$  have a (conditional) hypergeometric distribution. 

We use the notation Hypergeo$(\tau, n_1, 
n_2, \ldots, n_r; s)$ for
the multivariate hypergeometric random vector, in which the $i^{\textnormal{th}}$ component
is the number of balls of color $i$ that appear in a sample of size $s$ drawn from
an urn containing~$\tau$ balls, of which $n_i$ balls are of color $i$,
for $i=1,\ldots, r$.  This multivariate  hypergeometric distribution 
is standard
and can be found in classic books on distribution theory, such 
as~\cite{Kendall}. In particular, we need the mean, variances and covariance
for a bivariate marginal distribution. Suppose $Y_i$ is the number
of balls of color $i$, for $i=1, \ldots, r$, that appear in the sample. Then, $(Y_i, Y_j, \tau-Y_i-Y_j)$ have a  trivariate hypergeometric  distribution like Hypergeo$(\tau, n_i, n_j, \tau - n_i - n_j; s)$, with  $Y_i$ distributed like
Hypergeo$(\tau, n_i,  \tau - n_i; s)$.
Later, we utilize the formulas
\begin{align*}
\E[Y_i] &= \frac {n_i} \tau \, s,\\
\V[Y_i] &=   \frac {n_i(\tau-n_i)(\tau-s)} {\tau^2(\tau-1)} \, s,\\
\Cov[Y_i, Y_j] &= - \frac {n_i n_j(\tau-s)} {\tau^2(\tau-1)} \, s;
\end{align*}
see~\cite{Kendall}.

To develop martingale differences, we use the backward difference 
operator~$\nabla$. Acting on a function $h_n$, this operator stands for $\nabla h_n = h_n- h_{n-1}$.  
\section{Local containment profile}
Let $\indicator_{n,k}^{(\theta)}$ be an indicator of the event that vertex~$n$
chooses vertex $k$ in the hyperedge appearing at time $n$. 
The indicator $\indicator_{n,k}^{(\theta)}$ 
is a Bernoulli random variable that assumes 
the value~1 with probability ${\tau_{n-1}^{(\theta)}-1 \choose {\theta -2}} /{\tau_{n-1}^{(\theta)} \choose {\theta -1}} = (\theta-1)/\tau_{n-1}^{(\theta)}$, otherwise it assumes
the value 0 with the complement probability.
This indicator  has the moment generating function
\begin{equation}\psi_{n,k}^{(\theta)} (t) := \E\big[e^{\indicator_{n,k}^{(\theta)}t}\big] = 1- \frac {\theta-1} {\tau_{n-1}^{(\theta)}} 
          + \frac {(\theta-1) e^t} {\tau_{n-1}^{(\theta)}}.
\label{Eq:psi}
\end{equation}

We have a stochastic 
recurrence relation for $C_{n,k}^{(\theta)}$. 
At time $n$, the vertex labeled $k$ either retains its level of containment at time $n-1$ (if it is not chosen for the
$n^{\textnormal{th}}$ hyperedge), or  its level of containment increases by 1 (if chosen for the
$n^{\textnormal{th}}$ hyperedge). We thus have
\begin{equation}
C_{n,k}^{(\theta)} = C_{n-1,k}^{(\theta)} + \indicator_{n,k}^{(\theta)}.
\label{Eq:stoch}
\end{equation}
The earliest time at which vertex $k$ is in the hyperrecursive tree is $k$, at which point 
it is contained in exactly one hyperedge. Therefore, the boundary
condition is $C_{k,k}^{(\theta)} = 1$. Note that $C_{n-1,k}^{(\theta)}$ and
$\indicator_{n,k}^{(\theta)}$ are independent.

Unwinding the recurrence~(\ref{Eq:stoch}) back to the boundary condition, 
we get a representation 
\begin{equation}
C_{n,k}^{(\theta)} = 1 + \indicator_{k+1, k}^{(\theta)}  + \indicator_{k+2,k} ^{(\theta)}+ \cdots+ \indicator_{n,k}^{(\theta)}
\label{Eq:sumindicators}
\end{equation}
into independent (but not identically distributed) indicator random variables. 
\begin{proposition}
\label{Prop:meanvar}
Let $C_{n,k}^{(\theta)}$ be the containment level of the vertex labeled $k\ge 0$ in a 
hyperrecursive tree  of edge size $\theta$ at age $n$.\footnote{Recall that when $k=0$,
we are tracking a chosen representative among the originators (all 
labeled with 0).} 
We have
\begin{align*}
\E\big[C_{n,k}^{(\theta)}\big] &= 1 +  (\theta-1) \big(H_n (\theta-1) - H_k (\theta-1)\big)  \\
       &= \begin{cases}(\theta-1)\big(\ln  (n) -\psi(\theta-1) 
                  - \frac 1 {\theta -1}\big)  \\
         \qquad {}+1 - (\theta-1) H_k (\theta-1)  + O\big(\frac 1 n\big), 
               &\mbox{if \ } k \ge 0 \ \mbox{is fixed};\\
                 (\theta-1) \ln \big(\frac n k \big)    + 1 + O\big(\frac 1 k\big), 
               &\mbox{if \ }n \ge k\to \infty.
\end{cases}
\end{align*}
and
\begin{align*}
\V\big[C_{n,k}^{(\theta)}\big] &= (\theta-1) \big(H_n(\theta-1) - H_k (\theta-1) \big) \\ 
     & \qquad\qquad {} - (\theta-1)^2 \big(H_n^{(2)}
              (\theta-1) - H_k^{(2)}  (\theta-1) \big) \\ 
    &= \begin{cases}(\theta-1)  \ln (n) + O(1), 
               &\mbox{if \ } k \ge 0 \ \mbox{is fixed};\\
               (\theta-1)  \ln \big(\frac n k \big)  + O\big(\frac 1 k\big), 
               &\mbox{if \ } n\ge k\to \infty.
\end{cases}
\end{align*}           
\end{proposition}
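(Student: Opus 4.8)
The plan is to exploit the representation~(\ref{Eq:sumindicators}) directly, since it already expresses $C_{n,k}^{(\theta)}$ as a sum of independent Bernoulli indicators with explicitly known success probabilities; this reduces the whole proposition to elementary manipulation of sums. For the mean I would apply linearity of expectation to~(\ref{Eq:sumindicators}), using that each $\indicator_{j,k}^{(\theta)}$ has expectation $(\theta-1)/\tau_{j-1}^{(\theta)} = (\theta-1)/(j+\theta-1)$ by the Bernoulli probability recorded just before~(\ref{Eq:psi}) together with~(\ref{Eq:total}). This yields
\[
\E\big[C_{n,k}^{(\theta)}\big] = 1 + (\theta-1)\sum_{j=k+1}^n \frac{1}{j+\theta-1}.
\]
The remaining step is to recognize the tail sum as a difference of generalized harmonic numbers: since $H_n(\theta-1) = \sum_{i=1}^n 1/(i+\theta-1)$, the sum from $k+1$ to $n$ equals $H_n(\theta-1) - H_k(\theta-1)$, which is exactly the first displayed identity. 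The only care here is bookkeeping of the summation limits so that they align with the harmonic-number convention.

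For the variance, the crucial leverage is the independence of the indicators noted right after~(\ref{Eq:stoch}): the variance of a sum of independent summands is the sum of variances. Each indicator is Bernoulli with parameter $p_j = (\theta-1)/(j+\theta-1)$, so its variance is $p_j(1-p_j) = p_j - p_j^2$. Summing the linear part reproduces $(\theta-1)\big(H_n(\theta-1)-H_k(\theta-1)\big)$ just as for the mean, while summing the quadratic part contributes $(\theta-1)^2\sum_{j=k+1}^n 1/(j+\theta-1)^2 = (\theta-1)^2\big(H_n^{(2)}(\theta-1)-H_k^{(2)}(\theta-1)\big)$; taking the sign into account gives the second displayed identity.

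The asymptotic refinements then reduce to substituting the harmonic-number expansions~(\ref{Eq:harmonic1}) and~(\ref{Eq:harmonic2}). For fixed $k$, I would feed~(\ref{Eq:harmonic1}) into $H_n(\theta-1)$, displaying the constants $-\psi(\theta-1)-1/(\theta-1)$ and the fixed quantity $H_k(\theta-1)$ explicitly for the mean and absorbing them into the $O(1)$ error for the variance, while~(\ref{Eq:harmonic2}) shows the second-order harmonic terms are $O(1)$ and hence negligible against the $(\theta-1)\ln n$ leading term. The hard part, such as it is, lies in the regime $n \ge k \to \infty$: here I would expand both $H_n(\theta-1)$ and $H_k(\theta-1)$ via~(\ref{Eq:harmonic1}), observe that the additive constants $-\psi(\theta-1)-1/(\theta-1)$ cancel in the difference to leave $\ln(n/k)$, and then argue that because $n \ge k$ the combined remainder $O(1/n)+O(1/k)$ collapses to $O(1/k)$. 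The same cancellation applied to the second-order terms, whose difference is the tail $\sum_{j>k} 1/(j+\theta-1)^2 = O(1/k)$, confirms that the $H^{(2)}$ contribution to the variance is likewise $O(1/k)$ in this regime. Thus the entire proof is routine once~(\ref{Eq:sumindicators}) and independence are in hand, with the only genuine attention needed being the tracking of which error term dominates in each phase.
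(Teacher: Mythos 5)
Your proposal is correct and follows essentially the same route as the paper: both exploit the decomposition~(\ref{Eq:sumindicators}) into independent Bernoulli indicators with success probabilities $(\theta-1)/(j+\theta-1)$, obtain the exact mean and variance by linearity and additivity of variances, and then derive the asymptotics from the harmonic-number expansions~(\ref{Eq:harmonic1})--(\ref{Eq:harmonic2}). Your additional remark about the cancellation of the constants $-\psi(\theta-1)-1/(\theta-1)$ in the regime $n\ge k\to\infty$, leaving $\ln(n/k)$ with an $O(1/k)$ remainder, is a correct elaboration of a step the paper leaves implicit.
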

\begin{proof}
Taking expectations of~(\ref{Eq:sumindicators}), we find
\begin{align*}
\E\big [C_{n,k}^{(\theta)}\big] 
   &= 1 + \E\big[\indicator_{k+1, k}^{(\theta)}\big]  
       + \E\big[\indicator_{k+2,k}^{(\theta)}\big]
           + \cdots+ \E\big[\indicator_{n,k}^{(\theta)}\big]\\
      &=  1 +  \sum_{i=k+1}^n \frac {\theta-1} {\tau_{i-1}^{(\theta)}}\\
      &=  1+  (\theta-1)\sum_{i=k+1}^n \frac 1 {i + \theta-1}\\
      &=  1 +  (\theta-1) \big(H_n (\theta-1) - H_k  (\theta-1)\big).
\end{align*}
The asymptotic average follows from the approximation in~(\ref{Eq:harmonic1}).

By the independence of the indicators in~(\ref{Eq:sumindicators}), we similarly have
\begin{align*}
\V\big[C_{n,k}^{(\theta)}\big] 
      &= \sum_{i=k+1}^n \V\big[\indicator_{n,k}^{(\theta)}\big] \\
      &= \sum_{i=k+1}^n \frac {\theta-1} {i + \theta-1} 
              - \sum_{i=k+1}^n \frac {(\theta-1)^2} {(i + \theta-1)^2}\\
      &= (\theta-1)\big(H_n (\theta-1) - H_k (\theta-1)\big)  \\
                  &\qquad \qquad {}- (\theta-1)^2 \big(H_n^{(2)}(\theta-1) - H_k^{(2)}(\theta-1)\big).    
\end{align*}
The asymptotic variance follows from the 
approximations in~(\ref{Eq:harmonic1})--(\ref{Eq:harmonic2}).
\end{proof}          
\begin{lemma}
\label{Lem:mgf}
The moment generating function $\phi_{n,k}^{(\theta)}(t) = \E[e^{C_{n,k}^{(\theta)}t}]$ of $C_{n,k}^{(\theta)}$ is given by
$$\phi_{n,k}^{(\theta)} (t)= e^t \prod_{i=k}^{n-1} \frac {i + 1 + (\theta-1)e^t}{i+\theta}. $$
\end{lemma}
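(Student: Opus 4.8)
The plan is to build directly on the additive representation~(\ref{Eq:sumindicators}), which already expresses $C_{n,k}^{(\theta)}$ as the deterministic constant $1$ plus a sum of \emph{independent} (though not identically distributed) indicators $\indicator_{i,k}^{(\theta)}$ for $i = k+1, \ldots, n$. Because the moment generating function of a sum of independent random variables factors into the product of the individual moment generating functions, and because the additive shift by $1$ contributes a multiplicative factor $e^t$, I would immediately write
$$\phi_{n,k}^{(\theta)}(t) = \E\big[e^{C_{n,k}^{(\theta)}t}\big] = e^t \prod_{i=k+1}^{n} \psi_{i,k}^{(\theta)}(t),$$
where each $\psi_{i,k}^{(\theta)}$ is the indicator moment generating function recorded in~(\ref{Eq:psi}). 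The $e^t$ prefactor is precisely the contribution of the boundary value $C_{k,k}^{(\theta)} = 1$.

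Next I would simplify each factor in the product. Substituting $\tau_{i-1}^{(\theta)} = i + \theta - 1$ from~(\ref{Eq:total}) into~(\ref{Eq:psi}) and placing the three terms over the common denominator $i + \theta - 1$ collapses $\psi_{i,k}^{(\theta)}(t)$ to the single fraction $\big(i + (\theta-1)e^t\big)/(i+\theta-1)$, since the constant $1$ and the term $-(\theta-1)/(i+\theta-1)$ combine to leave $i$ in the numerator. The product then reads $\prod_{i=k+1}^{n} \big(i + (\theta-1)e^t\big)/(i+\theta-1)$, which already has the right shape.

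Finally, the only remaining manipulation is a cosmetic index shift: setting $j = i - 1$, so that $i$ running from $k+1$ to $n$ becomes $j$ running from $k$ to $n-1$ with $i = j+1$, rewrites the numerator as $j + 1 + (\theta-1)e^t$ and the denominator as $j + \theta$, yielding exactly the stated form. I expect no genuine obstacle: the entire content of the lemma is carried by the independence asserted in~(\ref{Eq:sumindicators}) together with the closed form~(\ref{Eq:psi}), so the argument is essentially bookkeeping. The only points that warrant care are keeping the endpoints of the product consistent through the reindexing and not dropping the $e^t$ prefactor coming from the boundary condition.
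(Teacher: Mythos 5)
Your proposal is correct and follows exactly the paper's own argument: factor the moment generating function using the independence in the representation~(\ref{Eq:sumindicators}), substitute~(\ref{Eq:psi}) and~(\ref{Eq:total}), and simplify. The algebraic simplification and reindexing are carried out correctly, with the endpoints and the $e^t$ prefactor handled properly.
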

\begin{proof}
The representation~(\ref{Eq:sumindicators}) as a sum of independent random variables gives rise to 
\begin{align*}
\phi_{n,k}^{(\theta)}(t)  &= \E\big[e^{C_{n,k}^{(\theta)}t}\big]  \\
     &= \E\big[e^{\big( 1 + \indicator_{k+1, k}^{(\theta)}  + \indicator_{k+2,k} ^{(\theta)}
                + \cdots+ \indicator_{n,k}^{(\theta)}\big)t} \big]\\
     &= e^t\, \E\big[e^{\indicator_{k+1, k}t} \big] 
                            \, \E\big[e^{\indicator_{k+2, k}^{(\theta)}t} \big]
               \ldots \E\big[e^{\indicator_{n, k}^{(\theta)}t}\big ] \qquad 
                                         {\rm (by\ independence)}\\
     &=  e^t \, \psi_{k+1, k}^{(\theta)}(t)\, \psi_{k+2, k}^{(\theta)}(t) 
     \cdots \, \psi_{n, k}^{(\theta)}(t).
\end{align*}    
Plug in~(\ref{Eq:psi}) and the sizes of the hyperrecursive trees in~(\ref{Eq:total}), 
and the statement follows after simplification.
\end{proof}

From Lemma~\ref{Lem:mgf},  we develop an exact distribution.
\begin{theorem}
\label{Theo:exact}
For $n\ge 1$ and $0\le k \le n$, let $C_{n,k}^{(\theta)}$ 
be the level of containment of the vertex $k$ in a hyperrecursive tree 
with edge size $\theta$ at age $n$. 
For $r\ge 1$, we have
$$\Prob\big(C_{n,k}^{(\theta)} = r\big) =
   \frac  {(\theta-1)^{(r-1)}} {\langle k +\theta
                          \rangle_{n-k}} \sum_{i=r-1}^{n-k}  
                            \Big[ {n-k\atop i}\Big]  {i \choose r-1}(k+1)^{i-r+1}.$$                          
\end{theorem}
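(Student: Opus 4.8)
The plan is to read the distribution directly off the moment generating function supplied by Lemma~\ref{Lem:mgf}. Treating $z = e^t$ as a formal indeterminate turns $\phi_{n,k}^{(\theta)}(t)$ into the probability generating function $g_{n,k}^{(\theta)}(z) = z\prod_{i=k}^{n-1}\frac{i+1+(\theta-1)z}{i+\theta}$, whence $\Prob\big(C_{n,k}^{(\theta)}=r\big) = [z^r]\, g_{n,k}^{(\theta)}(z)$. First I would dispose of the denominator: the product $\prod_{i=k}^{n-1}(i+\theta)$ runs over the $n-k$ consecutive integers starting at $k+\theta$, so it equals $\langle k+\theta\rangle_{n-k}$, precisely the $z$-free denominator in the claim. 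It then remains to expand the numerator polynomial $N(z) := z\prod_{i=k}^{n-1}\big(i+1+(\theta-1)z\big)$ and extract the coefficient of $z^r$.

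The heart of the matter is to recognize the numerator product as a single rising factorial. Setting $w = (\theta-1)z$ and reindexing first by $j = i+1$ (so $j$ ranges over $k+1,\dots,n$) and then by $p = j-k$ (so $p$ ranges over $1,\dots,n-k$), the product becomes $\prod_{p=1}^{n-k}(p+k+w)$. The key observation is that, writing $y = (k+1)+w$, this is $\prod_{p=1}^{n-k}\big(y+(p-1)\big) = \langle y\rangle_{n-k}$, a genuine rising factorial in the single shifted variable $y$. Consequently the Stirling generating function~(\ref{Eq:Stirling}) applies verbatim, giving $\langle y\rangle_{n-k} = \sum_{i}\Big[{n-k\atop i}\Big]y^i = \sum_{i}\Big[{n-k\atop i}\Big]\big((k+1)+w\big)^i$.

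From here I would expand $\big((k+1)+w\big)^i$ by the binomial theorem and collect powers of $w$. The leading factor $z$ in $N(z)$ shifts the index by one, so the coefficient of $z^r$ corresponds to the term $w^{r-1}$, i.e. to taking $s=r-1$ in the binomial sum. Tracking the three resulting factors, namely $(\theta-1)^{r-1}$ coming from $w^{r-1}=(\theta-1)^{r-1}z^{r-1}$, the binomial weight $\binom{i}{r-1}$, and the power $(k+1)^{i-(r-1)}$, and summing over the admissible range $i=r-1,\dots,n-k$, reproduces exactly the numerator $(\theta-1)^{r-1}\sum_{i=r-1}^{n-k}\Big[{n-k\atop i}\Big]\binom{i}{r-1}(k+1)^{i-r+1}$ of the asserted formula. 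Dividing by $\langle k+\theta\rangle_{n-k}$ then yields the statement.

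I expect the only genuine hazard to be the bookkeeping of the two index shifts, since the cleanliness of the whole argument hinges on the product collapsing to $\langle y\rangle_{n-k}$ with $y=(k+1)+w$; an off-by-one in the reindexing would spoil the alignment with the Stirling generating function. Once that identification is made correctly, the remaining manipulation is a routine exchange of the two finite sums and a reading-off of the coefficient, with the boundary case $n=k$ (where $C_{k,k}^{(\theta)}=1$) checked separately as a degenerate instance.
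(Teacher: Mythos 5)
Your proposal is correct and follows essentially the same route as the paper: pass from the moment generating function of Lemma~\ref{Lem:mgf} to the probability generating function, recognize the numerator as the rising factorial $\langle k+1+(\theta-1)u\rangle_{n-k}$, expand via the Stirling generating function~(\ref{Eq:Stirling}) and the binomial theorem, and extract the coefficient of $u^r$. The index shifts you flag as the main hazard are handled correctly, so there is nothing to add.
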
 
\begin{proof}
To obtain a probability generating function $\zeta_{n,k}^{(\theta)}(u) = \sum_{i=0}^\infty \Prob(C_{n,k}^{(\theta)} = i) u^i $, we
substitute $\ln(u)$ for $t$ in the generating function of 
Lemma~(\ref{Lem:mgf}). We obtain
\begin{align*}
\zeta_{n,k}^{(\theta)} (u) &= u\, \frac { (k+1 + (\theta-1) u)\ldots( n  + (\theta-1)  u)} 
    {(k+\theta) \cdots (n+\theta-1)} \\
              &= u\, \frac {\langle k+1 
                  +(\theta-1)u \rangle_{n-k}} {\langle k +\theta
                          \rangle_{n-k}} . 
\end{align*}
Using the generating function~(\ref{Eq:Stirling}), we write 
\begin{align*}
 \zeta_{n,k}^{(\theta)} (u) &= \frac u {\langle k +\theta
                          \rangle_{n-k}} \sum_{i=0}^{n-k}
                              \Big[ {n-k\atop i}\Big]
                           \big(k+ 1 + (\theta-1)u\big)^i \\
         &=  \frac u {\langle k +\theta
                          \rangle_{n-k}} 
                           \sum_{i=0}^{n-k}
                              \Big[ {n-k\atop i}\Big]
                           \sum_{m=0}^i (k+ 1)^{i-m} 
                            (\theta-1)^m u^m {i \choose m}\\
         &= \frac u {\langle k +\theta
                          \rangle_{n-k}}
                           \sum_{m=0}^{n-k} (\theta-1)^m u ^m
                           \sum_{i=m}^{n-k}  
                            \Big[ {n-k\atop i}\Big]  {i \choose m}(k+1)^{i-m} .
                           \end{align*}
The exact distribution in the statement of the theorem
follows upon extracting coefficients.
\end{proof}
\subsection{Phases in the local containment profile of a vertex}
\label{Subsec:phases}
Proposition~\ref{Prop:meanvar} shows that
the mean and variance of the hyperrecursive trees experience a phase change, 
as $k$ increases relative to $n$. For instance, for fixed~$k$,
the mean is asymptotic to $(\theta-1) \ln (n)$, as $n\to \infty$, and $k$ can 
only alter lower-order asymptotics. 
Such is the case for all fixed $k$, as $n\to\infty$. However,
a phase transition occurs when $k$ grows to infinity with~$n$, but remains $o(n)$, such as the case
$k = k(n) = \lceil  3 n^{\frac 3 4} + 2  \sqrt n - \pi \rceil$. In this range, $k$ provides essential leading-term asymptotics. The vertices that appear in the entire range in which $k = o(n)$ are 
``early.'' 

For the linear ``intermediate'' range, $k \sim \alpha n$, for $0 < \alpha < 1$, 
such as the case $\lceil \frac 5 7 n + 3\sqrt n + 6e\rceil$, 
the asymptotic mean is $1 + (\theta-1) \ln \big(\frac 1 \alpha \big)$. 
Vertices in the range $j \sim n$ are considered ``late''. 
The asymptotic mean of late vertices is just $1$, showing that the 
late arrivals, such as the case $k = \lfloor n - 5\ln (n+1) + 14\rfloor$, have negligible probability of participating in the recruiting events.

It is possible to conceive of a bizarre relation between $k$ and $n$, 
such as, for example, $k= \lfloor (\frac 1 2+ \frac {(-1)^n} 3) n \rfloor$, for which the 
mean containment level oscillates, without settling on any asymptotic average.
In these cases, we have no convergence in the mean, variance or distribution.
Such an oscillating case is not likely to appear in practice. 
 
Naturally, these phases in the mean are reflected in the asymptotic distributions. 
From Lemma~\ref{Lem:mgf}, we can get asymptotic distributions. 
It is beneficial for the asymptotic analysis 
to represent the product in 
Lemma~\ref{Lem:mgf} in terms of Gamma functions:
\begin{equation}
\phi_{n,k}^{(\theta)} (t) = e^{t} \, \frac { \Gamma( n  + 1 + (\theta-1)e^t)\, \Gamma (k+\theta)} 
    {\Gamma(n+ \theta) \,\Gamma(k+1+(\theta-1)  {e^t})}.
    \label{Eq:usher} 
\end{equation} 
\begin{theorem}
\label{Theo:phases}
Let $0\le k \le n$ and $C_{n,k}^{(\theta)}$ be the containment 
level of the vertex~$k$ in a hyperrecursive tree with edge size $\theta$ at age $n$. We have
\begin{itemize}
\item [(i)] For $k$  fixed: 
$$\frac{C_{n,k}^{(\theta)} -  (\theta-1) \ln (n)} {\sqrt {\ln (n)}}
    \ \convD\  \normal \big(0,\theta-1\big), $$
\item [(ii)]   For $k\to \infty$ and $k = o(n)$:
 $$\frac{C_{n,k}^{(\theta)} -  (\theta-1) \ln \big(\frac n k \big)} {\sqrt {\ln \big(\frac n k \big)}}
    \ \convD \ \normal \big(0,\theta-1\big), $$
\item [(iii)]   For $k\sim \alpha n$, and $0 < \alpha < 1$:
 $$C_{n,k}^{(\theta)} \
    \ \convD\  \ 1 +  \Poi \Big((\theta-1) \ln \Big(\frac 1 \alpha\Big)\Big),$$
 \item [(iv)]  For $k = n- o(n)$:  
  $$C_{n,k}^{(\theta)} \ \inprob \  1.$$
 \end{itemize}
\end{theorem}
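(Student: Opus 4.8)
The plan is to prove the Gaussian phases (i)--(ii) through the moment generating function of Lemma~\ref{Lem:mgf}, the Poisson phase (iii) through the probability generating function, and the degenerate phase (iv) by a first-moment estimate. In every generating-function case the driving tool is the Gamma closed form~(\ref{Eq:usher}) combined with the ratio-of-Gammas expansion~(\ref{Eq:Stirlingapprox}).

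For parts (i) and (ii) I would compute the moment generating function of the standardized variable and match it against the normal. In case (i), for fixed $k$, I set $t = s/\sqrt{\ln n}$ and consider $e^{-s(\theta-1)\sqrt{\ln n}}\,\phi_{n,k}^{(\theta)}\big(s/\sqrt{\ln n}\big)$. The prefactor $e^t$ and the factors $\Gamma(k+\theta)$, $\Gamma(k+1+(\theta-1)e^t)$ are bounded and contribute $o(1)$ to the logarithm, while applying~(\ref{Eq:Stirlingapprox}) with $x=n$, $a=1+(\theta-1)e^t$, $b=\theta$ gives
$$\ln\frac{\Gamma(n+1+(\theta-1)e^t)}{\Gamma(n+\theta)} = (\theta-1)(e^t-1)\ln n + O\!\Big(\frac t n\Big).$$
Expanding $e^t-1 = t + t^2/2 + O(t^3)$ with $t=s/\sqrt{\ln n}$ turns the right-hand side into $(\theta-1)s\sqrt{\ln n} + (\theta-1)s^2/2 + o(1)$; the centering cancels the $\sqrt{\ln n}$ term exactly, leaving the log-MGF $(\theta-1)s^2/2$ of $\normal(0,\theta-1)$. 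Case (ii) is identical, except that $k\to\infty$ now lets me also apply~(\ref{Eq:Stirlingapprox}) to the $k$-Gammas, producing the combined exponent $(\theta-1)(e^t-1)\ln(n/k)$; the hypothesis $k=o(n)$ guarantees $\ln(n/k)\to\infty$, so with $t=s/\sqrt{\ln(n/k)}$ and centering by $(\theta-1)\ln(n/k)$ I again obtain $(\theta-1)s^2/2$. Since the limit is the everywhere-finite normal MGF, convergence of the MGF in a neighborhood of $0$ yields the stated $\convD$.

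For part (iii) I would work with the probability generating function $\zeta_{n,k}^{(\theta)}(u)$ and pass to the limit directly. Writing $\zeta_{n,k}^{(\theta)}(u)$ in Gamma form (substitute $u$ for $e^t$ in~(\ref{Eq:usher})) and applying~(\ref{Eq:Stirlingapprox}) to both the $n$-ratio and the $k$-ratio gives $\zeta_{n,k}^{(\theta)}(u)\to u\,(n/k)^{(\theta-1)(u-1)}$; as $k\sim\alpha n$ forces $n/k\to 1/\alpha$, the limit becomes $u\exp\big((\theta-1)\ln(1/\alpha)(u-1)\big)$, which is exactly the PGF of $1+\Poi\big((\theta-1)\ln(1/\alpha)\big)$, and the continuity theorem for probability generating functions gives the claim. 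For part (iv) I use that $C_{n,k}^{(\theta)}\ge 1$ deterministically, while by Proposition~\ref{Prop:meanvar} one has $\E\big[C_{n,k}^{(\theta)}-1\big] = (\theta-1)\sum_{i=k+1}^n 1/(i+\theta-1) \le (\theta-1)(n-k)/(k+\theta)$, which is $o(n)/n\to 0$ when $k=n-o(n)$; Markov's inequality applied to the nonnegative variable $C_{n,k}^{(\theta)}-1$ then forces $\Prob\big(C_{n,k}^{(\theta)}\ge 2\big)\to 0$, i.e. $C_{n,k}^{(\theta)}\inprob 1$.

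The main obstacle is the asymptotic bookkeeping in (i)--(ii): I must verify that every error term survives multiplication by $\ln n$ (respectively $\ln(n/k)$), which is built into the scaling. Concretely, the exponent $a-b=(\theta-1)(e^t-1)$ in~(\ref{Eq:Stirlingapprox}) is itself $O(t)\to 0$, so the first Stirling correction is only $O(t/n)$ and is negligible, and the cubic remainder $(\theta-1)t^3\ln n/6 = O\big((\ln n)^{-1/2}\big)$ from the $e^t$ expansion indeed vanishes; confirming these two estimates, together with the cancellation of the leading $\sqrt{\ln n}$ contribution by the centering, is the delicate point of the argument.
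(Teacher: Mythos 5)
Your proposal is correct, and for parts (i)--(iii) it follows essentially the same route as the paper: the Gamma-ratio form of the moment generating function, the Stirling expansion~(\ref{Eq:Stirlingapprox}), the local expansion $e^t-1=t+t^2/2+O(t^3)$ under the $\sqrt{\ln}$ scaling, and a continuity theorem. (Your use of the probability generating function in (iii) is only the substitution $u=e^t$ away from the paper's MGF computation, so it is not a genuinely different argument.) The one place you genuinely diverge is part (iv): the paper obtains it as the $\alpha=1$ degenerate limit of the generating-function computation in (iii), whereas you argue from first moments --- since $C_{n,k}^{(\theta)}\ge 1$ deterministically and $\E\big[C_{n,k}^{(\theta)}-1\big]\le(\theta-1)(n-k)/(k+\theta)\to 0$ for $k=n-o(n)$, Markov's inequality gives $\Prob\big(C_{n,k}^{(\theta)}\ge 2\big)\to 0$ and hence $C_{n,k}^{(\theta)}\inprob 1$. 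Your version is more elementary and has the small advantage of not passing through a limit generating function at all, only the exact mean from Proposition~\ref{Prop:meanvar}; the paper's version has the advantage of exhibiting (iv) as the continuous boundary case of the Poisson phase, which supports its later remark about the phases flowing into one another at the seam lines. Both are valid, and your bookkeeping of the error terms in (i)--(ii) (the $O(t/n)$ Stirling correction and the $O(t^3\ln n)=O((\ln n)^{-1/2})$ cubic remainder) matches what the paper carries implicitly.
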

\begin{proof}
We start with the phase in which $k$ is fixed.
For this case, by the Stirling approximation in~(\ref{Eq:Stirlingapprox}), 
we write~(\ref{Eq:usher}) with $t$ scaled:
\begin{align*} \phi_{n,k}^{(\theta)} \Big( \frac t {\sqrt {\ln (n)}}\Big) 
    &=e^{t/\sqrt {\ln( n)}} \, \frac { \Gamma\big( n  + 1 + (\theta-1)e^{t/\sqrt {\ln (n)}}\big) \,\Gamma (k + \theta)} 
    {\Gamma (n+\theta)\, 
            \Gamma\big(k +1+ (\theta-1)e^{t/\sqrt {\ln (n)}} \big)}\\
    &\sim  {n^{(\theta-1)(e^{t/\sqrt {\ln (n)}} - 1)}}.
\end{align*}
Going through a local expansion of the exponential, we write
\begin{align*} \phi_{n,k}^{(\theta)} \Big( \frac t {\sqrt {\ln (n)}}\Big) 
    &\sim  e^{\big((\theta-1) \big( 1 + \frac 1 {\sqrt{\ln (n)}} 
       +  \frac {t^2} {2 \ln (n)} +  O\big(\frac {t^3} {\ln (n)}\big) \big)
               \ln (n)- (\theta-1)\ln (n)\big)}.
\end{align*}
We can reorganize this relation as
\begin{align*} \phi_{n,k}^{(\theta)} \Big( \frac t {\sqrt {\ln (n)}}\Big)
   e^{-  (\theta-1) \, t \sqrt{\ln (n)}}
    &\sim  \exp\Big(\frac {(\theta-1)t^2} 2 +  O\Big(\frac {t^3} {\sqrt{\ln (n)}}\Big) \Big).
\end{align*}
At any fixed $t\in \mathbb R$,
we have convergence
$$\E\Big[e^{\frac{C_{n,k}^{(\theta)} -  (\theta-1) \ln (n)} {\sqrt {\ln (n)}}\, t} \Big]
    \to  e^{\frac {(\theta-1) t^2} 2}.$$
The right-hand side is the moment generating function of a centered normal distribution with variance $\theta-1$.
By L\'evy's continuity theorem~\cite{Williams}, we establish convergence in distribution as stated in Part (i).
    
The analysis of the rest of the phase of early $k$, a phase in which $k\to\infty$, but $k=o(n)$, is not much different from
the fixed $k$ phase. It only requires
some minor tweaks to bring in the role of $k$, which is now pronounced.
In this phase, we apply the Stirling approximation in~(\ref{Eq:Stirlingapprox}) to all four gamma functions in~(\ref{Eq:usher}). Consequently, we have
\begin{align*} 
\phi_{n,k}^{(\theta)} \Big( \frac t {\sqrt {\ln (n/k)}}\Big) 
    &= e^{t/\sqrt {\ln (n/k)}} \, \frac {n^{ 
         (\theta-1)t/\sqrt{\ln (n/k)}
        - (\theta-1)}  \big(1+O\big(\frac 1 n\big)\big)} 
                {k^{ (\theta-1)e^{t/\sqrt{\ln (n{\bf red} /k )}} - (\theta-1)} 
         \big(1+O\big(\frac 1 k\big)\big)}\\
    &\sim \Big(\frac n k \Big)^{(\theta-1)e^{t/ \sqrt{\ln (n/k)}}  
          - (\theta-1)}.
\end{align*}
From here, steps follow as in the case of fixed $k$. We get convergence in distribution as stated in Part (ii) of the theorem.

In the intermediate and late phases $j\sim \alpha n$, for $\alpha \in (0, 1]$, no scaling is required to get convergence in distribution. Instead, we have 
\begin{align*}
\phi_{n,k}^{(\theta)}(t)
    &= e^t \, \frac {n^{ (\theta-1)e^t - (\theta-1)}  \big(1+O\big(\frac 1 n\big)\big)} 
               {k^{ (\theta-1)e^t - (\theta-1)} 
         \big(1+O\big(\frac 1 k\big)\big)}\\
    &\to  e^t \Big(\frac 1 \alpha\Big)^{(\theta-1)(e^t - 1)}\\
    &=  e^{t +  (\theta-1)\ln (\frac 1 \alpha) \, ( e^t- 1)}.\\
\end{align*}  
The moment generating function on the right-hand side is that of 1 added to a Poisson random variable 
with mean $(\theta-1) \ln(1/\alpha)$.  
By L\'evy's continuity theorem (Theorem 18.1 in~\cite{Williams}), we establish convergence in distribution as stated in Part (iii).
Then, $C_{n,k}^{(\theta)}$ degenerates to a constant in the case $\alpha =1$, where we get
$$C_{n,k}^{(\theta)}
     \ \convD\  1 .$$ 
Convergence in distribution to a constant implies convergence in probability, 
as stated in Part (iv) of the theorem. 
\end{proof}
\begin{remark}
Phase changes of the type in Theorem~\ref{Theo:phases} have been observed
in~\cite{QQ} and~\cite{Mah2019}.
\end{remark}
\begin{remark}
At the seam lines between the phases, the change is not abrupt. In fact,
the phases ``flow into each other'' in a natural way. For instance, in the case
of Part (ii), we can write $\ln(n/k)$ as $\ln (n) - \ln (k)$. Then, we see that
for $k$ fixed, $\ln (k) / \sqrt {\ln (n)} \to 0$, and by Slutsky's 
theorem~\cite{Slutsky}, we get
the statement adjusted as in Part (i). 
The role of $k$ is negligible, so long as $\ln (k) = o (\sqrt{ \ln (n)}\,)$.
Past this threshold,  $\ln (k)$ cannot be neglected relative to $\sqrt{ \ln (n)}$, 
and $k$ must be included in the convergence.
Again, the limit in
Part (iii) converges to that in Part (iv), as $\alpha \to 1$.      
\end{remark}   
\section{Global containment profile}
In this section, we look at a profile of the hyperrecursive tree determined by
a raw count of the number of vertices at a particular containment level.
Such a profile is global, as it cannot be determined without looking at
all the vertices in the entire hyperrecursive tree.

We defined  $X_{n,i}^{(\theta)}$ to be the number of vertices contained in exactly $i$ hyperedges.
To discern how the different containment  levels interact,
we investigate the mean of the row vector $(X_{n,1}^{(\theta)}, X_{n,2}^{(\theta)})$ and
its covariance matrix.  

We start with stochastic recurrences from which we proceed to a calculation 
of the exact mean and variances, which lead us to concentration laws. 
Eventually, we establish a central limit theorem for the vertices at the smallest 
level of containment, i.e., the vertices contained in one hyperedge. Note 
that when
$\theta=2$, in which case the hypergraph becomes the uniform
recursive tree, vertices at containment level 1 are simply
the leaves.
\subsection{Stochastic recurrences}
We discuss here recurrence equations that hold on the stochastic
path. Let~$Q_{n,i}^{(\theta)}$ be the number of vertices at containment level $i=1,2$
that appear in the sample chosen to construct the $n^{\textnormal{th}}$ hyperedge.
The row vector 
$$\big(Q_{n,1}^{(\theta)}, Q_{n,2}^{(\theta)}, \tau_{n-1}^{(\theta)} - Q_{n,1}^{(\theta)}- Q_{n,2}^{(\theta)}\big)$$ 
has a (conditional) trivariate
hypergeometric distribution, that selects
a sample of size $\theta-1$ from among $\tau_{n-1}^{(\theta)}$ vertices, of which
$X_{n,i}^{(\theta)}$ are at containment level $i$, for $i=1, 2$.
That is, the components of this row vector have the conditional joint
distribution
$$\Prob\big(Q_{n,1}^{(\theta)} =q_1, Q_{n,2}^{(\theta)}=q_2
   \, \big | \, X_{n-1,1}^{(\theta)},  X_{n-1,2}^{(\theta)}\big) 
   = \frac {{X_{n-1,1}^{(\theta)} \choose q_1} {X_{n-1,2}^{(\theta)}
 \choose q_2} {\tau_{n-1}^{(\theta)} - X_{n-1,1}^{(\theta)} -
    X_{n-1,2}^{(\theta)} \choose \theta -1 - q_1 - q_2}}{{\tau_{n-1}^{(\theta)}\choose \theta-1}}.$$  
The conditional means, variances, and the covariance are specified
in Section~\ref{Sec:notation}.

In the construction of the $n^{\textnormal{th}}$ hyperedge, each vertex at containment level~1 in the sample becomes upgraded
to containment level 2, and the newly added vertex at step $n$ is
at containment level 1. Whence, we have the stochastic recurrence
\begin{equation}
X_{n,1}^{(\theta)} = X_{n-1,1}^{(\theta)} - Q_{n,1}^{(\theta)} + 1.
\label{Eq:stoch1}
\end{equation}
Each vertex at containment level 2 in the sample becomes upgraded
to containment level 3. However, the $Q_{n,1}^{(\theta)}$ vertices 
at containment level 1 in the sample all become at containment level 2, giving rise to
the stochastic recurrence
\begin{equation}
X_{n,2}^{(\theta)} = X_{n-1,2}^{(\theta)} - Q_{n,2} ^{(\theta)}+  Q_{n,1}^{(\theta)}.
\label{Eq:stoch2}
\end{equation}
\subsection{The mean and covariance matrix}
The pair of stochastic equations~(\ref{Eq:stoch1})--(\ref{Eq:stoch2})
is sufficient to determine the means exactly and the quadratic order moments
asymptotically.
\begin{proposition}
\label{Prop:mean}
Let $X_{n,i}^{(\theta)}$ be the number of vertices  contained in exactly $i$ hyperedges, for $i=1,2$, of a recursive hyperrecursive tree of edge size $\theta$. We have the mean vector
\begin{align*}
\begin{pmatrix}
     \E\big[X_{n,1}^{(\theta)}\big]\\
     \E\big[X_{n,2}^{(\theta)}\big]
   \end{pmatrix}  &= \begin{pmatrix} \displaystyle
     \frac 1 \theta\,  n + 1 + (\theta-1) \, \Gamma (\theta) \frac {\Gamma(n+1)}{
                \Gamma(n+\theta)} \\ \\
     \displaystyle  \frac {(\theta-1)} {\theta^2} ( n + \theta) 
              +   \Gamma (\theta) \frac {\Gamma(n+1)}{
                \Gamma(n+\theta)} \Big((\theta-1)^2 H_n - \frac {\theta -1}
                       \theta \Big) 
   \end{pmatrix} \\
   &= \begin{pmatrix}  \displaystyle \frac n \theta +1+ O  \Big(\frac 1 {n^{\theta-1}} \Big) \\ \\
   \displaystyle  \frac {(\theta-1)} {\theta^2} (  n + \theta)  
        + O\Big(\frac {\ln (n)} {n^{\theta-1}}\Big)
   \end{pmatrix}.
\end{align*}
\end{proposition}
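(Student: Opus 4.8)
The plan is to turn the two stochastic recurrences into deterministic first-order recurrences for the expectations and then solve them with a common summation factor. First I would condition on the containment counts $X_{n-1,1}^{(\theta)}$ and $X_{n-1,2}^{(\theta)}$ and apply the hypergeometric mean formula from Section~\ref{Sec:notation} with sample size $s=\theta-1$ and total $\tau_{n-1}^{(\theta)}=n-1+\theta$, which gives $\E\big[Q_{n,i}^{(\theta)} \,\big|\, X_{n-1,1}^{(\theta)}, X_{n-1,2}^{(\theta)}\big] = \frac{\theta-1}{n-1+\theta}\,X_{n-1,i}^{(\theta)}$ for $i=1,2$. Taking expectations in~(\ref{Eq:stoch1}) and~(\ref{Eq:stoch2}) and abbreviating $a_n := \E[X_{n,1}^{(\theta)}]$ and $b_n := \E[X_{n,2}^{(\theta)}]$, the tower property collapses these into
$$a_n = \frac{n}{n+\theta-1}\,a_{n-1} + 1, \qquad b_n = \frac{n}{n+\theta-1}\,b_{n-1} + \frac{\theta-1}{n+\theta-1}\,a_{n-1},$$
with boundary values $a_0=\theta$ and $b_0=0$, reflecting that the $\theta$ originators all sit at containment level~$1$ at time~$0$.

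Both recurrences share the homogeneous multiplier $\frac{n}{n+\theta-1}$, so I would solve them with the single summation factor $P_n = \prod_{j=1}^n \frac{j}{j+\theta-1} = \Gamma(\theta)\,\Gamma(n+1)/\Gamma(n+\theta)$. Dividing through by $P_n$ telescopes the left-hand side and reduces the $a_n$ recurrence to $a_n = \theta P_n + P_n\sum_{m=1}^n 1/P_m$, where $1/P_m = \Gamma(m+\theta)/(\Gamma(m+1)\Gamma(\theta))$. The remaining sum $\sum_{m=1}^n \Gamma(m+\theta)/\Gamma(m+1)$ I would evaluate in closed form using the antidifference $f(m) = \frac1\theta\,\Gamma(m+\theta+1)/\Gamma(m+1)$, for which $f(m)-f(m-1) = \Gamma(m+\theta)/\Gamma(m+1)$. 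Multiplying back by $P_n$ and simplifying the Gamma ratios yields the stated $a_n = \frac n\theta + 1 + (\theta-1)\Gamma(\theta)\,\Gamma(n+1)/\Gamma(n+\theta)$.

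For $b_n$ I would substitute this closed form for $a_{m-1}$ into its summation. The key simplification is that the polynomial part of $a_{m-1}$, namely $\frac{m-1}\theta + 1 = \frac{m-1+\theta}\theta$, exactly cancels the factor $(m-1+\theta)$ sitting in the denominator of the inhomogeneous term; after multiplying by $1/P_m$ the summand splits cleanly into a telescoping piece $\frac{\theta-1}{\theta\Gamma(\theta)}\,\Gamma(m+\theta)/\Gamma(m+1)$ and a harmonic piece $(\theta-1)^2/m$. The first sums by the same antidifference as before, the second contributes $(\theta-1)^2 H_n$, and reassembling with $P_n$ produces the claimed formula for $b_n$. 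Finally, the asymptotics follow from the Stirling ratio~(\ref{Eq:Stirlingapprox}) with $a=1$, $b=\theta$, giving $\Gamma(n+1)/\Gamma(n+\theta)\sim n^{1-\theta}$; the Gamma correction in $a_n$ is then $O(n^{-(\theta-1)})$, and together with $H_n = \ln(n)+O(1)$ the correction in $b_n$ is $O(\ln(n)/n^{\theta-1})$.

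The main obstacle I anticipate is purely bookkeeping rather than conceptual: correctly carrying the $\Gamma(\theta)$ normalizations through the two nested summations, and spotting the cancellation $\frac{m-1+\theta}\theta$ that is what makes the $b_n$ sum telescopable in the first place. The other sensitive point is the boundary data, since a constant slip there would propagate; I would sanity-check by confirming that $a_0=\theta$ and $b_0=0$ are reproduced by the final formulas (using $H_0=0$), which fixes the constants unambiguously.
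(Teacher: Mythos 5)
Your proposal is correct and follows essentially the same route as the paper: the same conditional hypergeometric means yield the same pair of first-order linear recurrences with boundary values $\theta$ and $0$, solved by the same summation factor, with your explicit antidifference $f(m)=\frac1\theta\,\Gamma(m+\theta+1)/\Gamma(m+1)$ being precisely the content of the paper's identity~(\ref{Eq:ratio}) at $\alpha=\theta$, $\beta=1$. The bootstrapping of the closed form for $\E[X_{n,1}^{(\theta)}]$ into the second recurrence, the resulting split into a Gamma-ratio piece plus a $(\theta-1)^2/m$ harmonic piece, and the Stirling asymptotics all match the paper's argument.
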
 
\begin{proof}
Let $\field_n$ be the sigma field generated by the first $n$ hyperedge additions. Conditioning the stochastic relations~(\ref{Eq:stoch1}) 
and~(\ref{Eq:stoch2}) on $\field_{n-1}$, we obtain
 \begin{align*}
\E\big[X_{n,1}^{(\theta)} \, |\, \field_{n-1}\big] =& X_{n-1,1} - \E\big[Q_{n,1}^{(\theta)} \, |\, \field_{n-1}\big] + 1,\\
\E\big[X_{n,2}^{(\theta)}  \, |\,\field_{n-1}\big] =& X_{n-1,2}^{(\theta)} - \E\big[Q_{n,2}^{(\theta)} \, |\, \field_{n-1}\big] +  \E\big[Q_{n,1}^{(\theta)} \, |\, \field_{n-1}\big].
\end{align*} 
As discussed, the random variables $Q_{n,1}^{(\theta)}$,  $Q_{n,2}^{(\theta)}$ ,  and $\tau_{n-1}^{(\theta)} - Q_{n,1}^{(\theta)} - Q_{n,2}^{(\theta)}$
have a (conditional) trivariate hypergeometric marginal distribution; the conditional 
expectations are
 \begin{align}
\E\big[X_{n,1}^{(\theta)} \, |\, \field_{n-1}\big] &= X_{n-1,1}^{(\theta)} -\frac {X_{n-1,1}^{(\theta)}}{\tau_{n-1}^{(\theta)}} (\theta-1) +1, \label{Eq:X1}\\
\E\big[X_{n,2}^{(\theta)}\, |\, \field_{n-1}\big] &= X_{n-1,2}^{(\theta)}  -\frac {X_{n-1,2}^{(\theta)}}{\tau_{n-1}^{(\theta)}} (\theta-1) +  \frac {X_{n-1,1}^{(\theta)}}{\tau_{n-1}^{(\theta)}} (\theta-1);\nonumber
\end{align}
see the formulas in Section~\ref{Sec:notation}.
Taking an iterated average, 
 simultaneous recurrences can be written:
 \begin{align}
\E\big[X_{n,1}^{(\theta)}\big] &= \Big(1-  \frac {\theta-1}
      {\tau_{n-1}^{(\theta)}} \Big) 
          \E\big[X_{n-1,1}^{(\theta)}\big]  +1 \nonumber\\
          &=  \frac n {n+\theta-1} 
          \E\big[X_{n-1,1}^{(\theta)}\big]  +1,\label{Eq:EX1}\\
\E\big[X_{n,2}^{(\theta)}\big] &=  \Big(1 -\frac {\theta-1)} {\tau_{n-1}^{(\theta)}}\Big) 
               \E[X_{n-1,2}^{(\theta)}] +  \frac {\theta-1}{\tau_{n-1}^{(\theta)}} \E\big[X_{n-1,1}^{(\theta)}\big]\nonumber\\
                      &=    \frac n {n+\theta-1} 
          \E\big[X_{n-1,2}^{(\theta)}\big]  +  \frac {\theta-1}{n + \theta-1} 
          \E\big[X_{n-1,1}^{(\theta)}\big].
          \label{Eq:Xn2}
\end{align}

The first of these two equations is self contained, while the second has to await
for the solution of the first to be bootstrapped into it. The first equation
has the standard form 
\begin{equation}
y_n = g_n y_{n-1} + h_n,
\label{Eq:standard}
\end{equation}
with solution
$$y_n = \sum_{i=1}^n h_i\prod_{j=i+1}^n g_j + y_0 \prod_{j=1}^n g_j. $$

We solve~(\ref{Eq:EX1}), with $g_n =\frac{ n}{(n+\theta-1)}$ and $h_n =1$, and obtain
\begin{align*}
\E\big[X_{n,1}^{(\theta)}\big] 
    &= \sum_{i=1}^n \prod_{j=i+1}^n \frac j {j+\theta-1} 
         + \theta \prod_{j=1}^n \frac j {j+\theta-1} \\
    &=  \frac {\Gamma(n+1)} {\Gamma(n+\theta)}
   \sum_{i=1}^n  \frac {\Gamma(i+\theta)} {\Gamma(i+1)} + \frac {\theta\, \Gamma(\theta)\, \Gamma(n+1)} 
   {\Gamma(n+\theta)}. 
\end{align*}
To simplify the sum,  we use a known identity, which is namely
\begin{align}
\sum_{i=1}^n\frac{\Gamma(i+\alpha)}{\Gamma(i+\beta)}&= \frac{ \Gamma(n+\alpha+1)}{(\alpha-\beta +1)\Gamma(n+\beta)}  - \frac{ \Gamma(\alpha+1)}{(\alpha-\beta +1)\Gamma(\beta)},
\label{Eq:ratio}
\end{align}
for any given $\alpha, \beta \in \mathbb{R}^+$, such that $\beta \not = \alpha+1$.
Applying this identity with $\alpha=\theta\ge 2$ and $\beta = 1$, we obtain the stated
result after some straightforward simplification.

The asymptotic formula given
is a consequence of the Stirling approximation of the ratio of gamma 
functions  (cf.~(\ref{Eq:Stirlingapprox})). 

With $\E[X_{n,1}^{(\theta)}]$ in hand, we can bootstrap it into the recurrence for 
$\E[X_{n,2}^{(\theta)}]$ to also put that recursion in the form~(\ref{Eq:standard}).
The solution follows similar steps as those used in solving the recurrence
for $\E[X_{n,1}^{(\theta)}]$, and we only highlight the chief steps.
The recurrence~(\ref{Eq:Xn2}) is
\begin{align*}
  \E \big[X_{n,2}^{(\theta)} \big]
&= \frac{n}{n+\theta-1} \,\E \big[X_{n-1,2}^{(\theta)}  \big] \\
&\qquad {}+ \Big(\frac{\theta -1}{n+\theta-1} \Big) \Big(\frac{n-1}{\theta} +1+ \frac{(\theta-1) \, \Gamma(\theta)\, \Gamma(n)}{\Gamma(n+\theta-1)} \Big)   \nonumber \\
&=\frac{n}{n+\theta-1}  \, \E \big[X_{n-1,2}^{(\theta)}  \big] + \Big(\frac{\theta -1}{\theta}+  \frac{(\theta-1)^2\, \Gamma(\theta)\, \Gamma(n)}{\Gamma(n+\theta)} \Big) .
\end{align*}
Here, we have for~(\ref{Eq:standard})
$$y_0=\E\big[X_{0,2}^{(\theta)}\big]=0, \quad g_n= 
   \frac n {n+\theta-1}, \quad  h_n=\Big(\frac{\theta -1}{\theta}+  \frac{(\theta-1)^2\, \Gamma(\theta)\, \Gamma(n)}{\Gamma(n+\theta)} \Big) .$$
An exact solution follows after simplification.
The asymptotic formula given
is a consequence of the Stirling approximation of the ratio of gamma functions (cf.~(\ref{Eq:Stirlingapprox})). 
\end{proof}
\begin{theorem}
\label{Theo:variance}
Let $X_{n,i}^{(\theta)}$ be the number of vertices contained in exactly $i$ hyperedges in a hyperrecursive tree with parameter $\theta$ at age $n$, for $i=1,2$. 
Let ${\bf \Sigma}_n $ be the corresponding covariance matrix.
Upon scaling by $n$, the covariance matrix converges (as $n\to\infty$) as given below:
$$\frac 1 n   {\bf \Sigma_n}  \to 
  \begin{pmatrix}\displaystyle 
   \frac{(\theta-1)^2}{\theta^2(2\theta-1)} 
              &\displaystyle  - \frac{(\theta-1)^2(\theta^2+2\theta-1)}{\theta^3(2\theta-1)^2}\\\\
    \displaystyle - \frac{(\theta-1)^2(\theta^2+2\theta-1)}{\theta^3(2\theta-1)^2} &\displaystyle  \frac{(\theta-1)^2(6\theta^4-6\theta^3+8\theta^2-5\theta+1)}{\theta^4(2\theta-1)^3}
   \end{pmatrix}.$$ 
\end{theorem}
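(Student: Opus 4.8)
The plan is to derive exact recurrences for the three second-order quantities $\V[X_{n,1}^{(\theta)}]$, $\Cov[X_{n,1}^{(\theta)},X_{n,2}^{(\theta)}]$, and $\V[X_{n,2}^{(\theta)}]$ by conditioning on $\field_{n-1}$ and invoking the laws of total variance and total covariance, and then to extract their leading-order asymptotics. First I would combine the stochastic recurrences~(\ref{Eq:stoch1})--(\ref{Eq:stoch2}) with the conditional hypergeometric (co)variances recorded in Section~\ref{Sec:notation}. Writing $\tau = \tau_{n-1}^{(\theta)} = n-1+\theta$ and noting that $\tau - (\theta-1) = n$, these conditional second moments are quadratic in $X_{n-1,1}^{(\theta)}$ and $X_{n-1,2}^{(\theta)}$; for instance $\V[X_{n,1}^{(\theta)}\given \field_{n-1}] = \V[Q_{n,1}^{(\theta)}\given \field_{n-1}]$ is a constant multiple of $X_{n-1,1}^{(\theta)}(\tau - X_{n-1,1}^{(\theta)})$. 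Taking expectations therefore introduces $\E[(X_{n-1,1}^{(\theta)})^2]$, $\E[(X_{n-1,2}^{(\theta)})^2]$ and $\E[X_{n-1,1}^{(\theta)}X_{n-1,2}^{(\theta)}]$, which I would re-express through the unknown variances and covariance together with the means already furnished by Proposition~\ref{Prop:mean}.

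After this substitution the three recurrences form a \emph{triangular} linear system: the recurrence for $\V[X_{n,1}^{(\theta)}]$ is self-contained, the recurrence for $\Cov[X_{n,1}^{(\theta)},X_{n,2}^{(\theta)}]$ is driven by $\V[X_{n,1}^{(\theta)}]$, and the recurrence for $\V[X_{n,2}^{(\theta)}]$ is driven by both. Each has the standard form~(\ref{Eq:standard}), $y_n = g_n y_{n-1} + h_n$, with the \emph{same} homogeneous coefficient $g_n = (n/\tau)^2 - \kappa_n = 1 - \frac{2(\theta-1)}{n} + O(n^{-2})$, where $\kappa_n = n(\theta-1)/(\tau^2(\tau-1)) = O(n^{-2})$ is the shared hypergeometric prefactor. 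I would solve each recurrence in turn, bootstrapping the solution of the earlier equation into the forcing term of the next, exactly as in the proof of Proposition~\ref{Prop:mean}.

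For the asymptotics I would use the elementary fact that a recurrence $y_n = (1 - a/n + O(n^{-2}))y_{n-1} + c + o(1)$ with $a > 0$ satisfies $y_n/n \to c/(1+a)$ (seen from $\prod_{j=i+1}^n g_j \sim (i/n)^{a}$ and a Riemann-sum evaluation of $\sum_i h_i \prod_{j>i} g_j$). Here $a = 2(\theta-1)$, so $1+a = 2\theta-1$, and each entry of $\frac 1 n \Sigma_n$ equals its limiting forcing constant divided by $2\theta-1$. Using $\E[X_{n,1}^{(\theta)}] \sim n/\theta$ and $\E[X_{n,2}^{(\theta)}] \sim (\theta-1)n/\theta^2$ from Proposition~\ref{Prop:mean}, the forcing constant for $\V[X_{n,1}^{(\theta)}]$ is $\kappa_n(\tau\E[X_{n-1,1}^{(\theta)}] - \E[X_{n-1,1}^{(\theta)}]^2) \to (\theta-1)^2/\theta^2$, which yields the $(1,1)$ entry. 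The covariance then picks up an additional $O(1)$ contribution from the cross-term $\frac{\theta-1}{n}\V[X_{n-1,1}^{(\theta)}]$, and the variance of $X_{n,2}^{(\theta)}$ from $\frac{2(\theta-1)}{n}\Cov[X_{n-1,1}^{(\theta)},X_{n-1,2}^{(\theta)}]$, each surviving because the driving quantity grows linearly; assembling these with the mean-generated forcing terms and dividing by $2\theta-1$ produces the remaining two entries.

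The main obstacle I anticipate is the bookkeeping in the asymptotic forcing terms: one must correctly sort out which contributions survive at the constant order $O(1)$ and which are negligible. In particular, a term such as $(b_n^2 - \kappa_n)\V[X_{n-1,1}^{(\theta)}]$, with $b_n = (\theta-1)/\tau$, is $O(n^{-2})\cdot O(n) = O(n^{-1})$ and drops out, whereas the coupling $\frac{2(\theta-1)}{n}\Cov[X_{n-1,1}^{(\theta)},X_{n-1,2}^{(\theta)}]$ persists; a single misclassification corrupts a whole entry of the matrix. One also has to confirm that the $O(n^{-(\theta-1)})$ and $O(n^{-(\theta-1)}\ln n)$ error terms carried by the means in Proposition~\ref{Prop:mean} do not disturb the leading order, and to carry out the routine but lengthy polynomial algebra that collapses the combined forcing constants into the stated rational functions of $\theta$.
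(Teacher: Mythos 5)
Your proposal is correct, and the approach is genuinely different from the paper's in a way worth spelling out. The paper works with the \emph{raw} second moments $\E[(X_{n,1}^{(\theta)})^2]$, $\E[X_{n,1}^{(\theta)}X_{n,2}^{(\theta)}]$, $\E[(X_{n,2}^{(\theta)})^2]$: it squares the stochastic recurrences, obtains three recurrences of the form $y_n=g_ny_{n-1}+h_n$ with exactly the $g_n=\frac{n(n-1)}{(n+\theta-1)(n+\theta-2)}$ you identified (your $(n/\tau)^2-\kappa_n$ simplifies to this), solves each explicitly via Gamma-function products and Stirling's approximation to two asymptotic orders ($n^2$ and $n$), and only then subtracts the squared/product means, relying on the advertised ``huge cancellations'' of the $n^2$ terms. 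You instead center first via the laws of total variance and covariance, so your unknowns are already $O(n)$ and no cancellation is needed, and you replace the explicit solution by the transfer principle $y_n=(1-a/n+O(n^{-2}))y_{n-1}+c+o(1)\Rightarrow y_n/n\to c/(1+a)$ with $a=2(\theta-1)$, which cleanly explains the ubiquitous factor $2\theta-1$ in the denominators. I checked that your forcing constants reproduce the stated matrix; e.g.\ for the covariance, $c=(\theta-1)\cdot\frac{(\theta-1)^2}{\theta^2(2\theta-1)}-\frac{(\theta-1)^2}{\theta^3}-\frac{(\theta-1)^2}{\theta^2}=-\frac{(\theta-1)^2(\theta^2+2\theta-1)}{\theta^3(2\theta-1)}$, and dividing by $2\theta-1$ gives the $(1,2)$ entry; the analogous computation for $\V[X_{n,2}^{(\theta)}]$ yields $\frac{(\theta-1)^2(6\theta^4-6\theta^3+8\theta^2-5\theta+1)}{\theta^4(2\theta-1)^2}$ as the forcing constant, as required. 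Your classification of surviving terms is also right: the couplings $\frac{\theta-1}{n}\V[X_{n-1,1}^{(\theta)}]$ and $\frac{2(\theta-1)}{n}\Cov[X_{n-1,1}^{(\theta)},X_{n-1,2}^{(\theta)}]$ persist at order $O(1)$ because the driving quantities grow linearly, while $((\theta-1)/\tau)^2\V[X_{n-1,1}^{(\theta)}]=O(1/n)$ drops, and the triangular bootstrapping guarantees each coupling has a genuine limit before it is needed. What the paper's route buys is the sharper information recorded in its appendix (second moments accurate to $O(\ln n)$); what yours buys is that the bookkeeping is confined to a handful of $O(1)$ forcing constants rather than to cancellations among $n^2$-sized quantities, at the cost of recovering only the leading order of ${\bf \Sigma}_n$ --- which is all the theorem asserts.
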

\begin{proof}
It is folklore that variance computation is very lengthy, a phenomenon
called the {\em combinatorial explosion}. We only highlight the salient
points.
  
The starting point for variance-covariance computation is the pair of stochastic
recurrences~(\ref{Eq:stoch1}) and~(\ref{Eq:stoch2}), from which
we can get stochastic recurrence relations for the second-order moments. We take the square of each of these equations, as well as their product: 
\begin{align*}
\big(X_{n,1}^{(\theta)}\big)^2 &= \big( X_{n-1,1}^{(\theta)}\big)^2 + \big(Q_{n,1}^{(\theta)}\big)^2 + 1 - 2 X_{n-1,1}^{(\theta)} Q_{n,1}^{(\theta)} + 2X_{n-1,1}^{(\theta)}  - 2Q_{n,1}^{(\theta)};\\
X_{n,1}^{(\theta)} X_{n,2}^{(\theta)}&= X_{n-1,1}^{(\theta)} X_{n-1,2}^{(\theta)} - X_{n-1,1}^{(\theta)} Q_{n,2}^{(\theta)}
             + X_{n-1,1}^{(\theta)} Q_{n,1}^{(\theta)}\\
&\qquad {} -  X_{n-1,2}^{(\theta)} Q_{n,1}^{(\theta)} + Q_{n-1,1}^{(\theta)} Q_{n,2}^{(\theta)} - \big(Q_{n,1}^{(\theta)}\big)^2 \\
&\qquad {}   +X_{n-1,2}^{(\theta)} -Q_{n,2}^{(\theta)}  + Q_{n,1}^{(\theta)} ;\\
(X_{n,2}^{(\theta)}\big)^2 &= \big(X_{n-1,2}^{(\theta)}\big)^2    + \big(Q_{n,2} ^{(\theta)}\big)^2+  \big(Q_{n,1} ^{(\theta)}\big)^2
    - 2X_{n-1,1}^{(\theta)} Q_{n,2}^{(\theta)}\\
    &\qquad {}  + 2X_{n-1,2}^{(\theta)} Q_{n,1}^{(\theta)} - 2Q_{n,1} ^{(\theta)} Q_{n,2}^{(\theta)}.
\end{align*}

We next take the expectation (conditioned on $\field_{n-1}$)  and use the 
conditional trivariate hypergeometric distribution of $(Q_{n,1}^{(\theta)}, 
Q_{n,2}^{(\theta)},  \tau_{n-1}^{(\theta)}- Q_{n,1}^{(\theta)}-Q_{n,2}^{(\theta)})$, which
comes in terms of $(X_{n-1,1}^{(\theta)}, X_{n-1,2}^{(\theta)})$. 
So, an iterated 
expectation on each conditional recurrence gives us three unconditional
recurrence equations in $\E[(X_{n,1}^{(\theta)})^2]$,
$\E[X_{n,1}^{(\theta)}X_{n,2}^{(\theta)}]$
and $\E[(X_{n,2}^{(\theta)})^2]$.  

 It is evident that we need a bootstrapping technique: the recurrence equation

 for $\E\big[(X_{n,1}^{(\theta)})^2\big]$ is self contained, going back only to $\E\big[(X_{n-1,1}^{(\theta)})^2\big]$. So, we can start with it. It has the form~(\ref{Eq:standard}), with solution
$$\E\big[\big(X_{n,1}^{(\theta)}\big)^2\big] = \frac {n^2}{\theta^2} +\frac {5\theta^2-4\theta+1}{\theta^2(2\theta-1)}\, n  + O(1).$$
 
The recurrence equation for  
$\E\big[X_{n,1} ^{(\theta)} X_{n,2} ^{(\theta)}\big]$ involves $\E\big[(X_{n-1,1}^{(\theta)})^2\big]$, which is now available.
So, after all, the equation is of the form~(\ref{Eq:standard}) 
(with some of the terms in $h_n$  specified only asymptotically). 
We obtain the asymptotic solution
 $$\E\big[X_{n,1} ^{(\theta)} X_{n,2}^{(\theta)}\big] = \frac {\theta -1}{\theta^2}\, n^2  + \frac {7\theta^4-16\theta^3+14 \theta^2 -6\theta +1}{\theta^3(2\theta-1)^2}\, n + O\big(\ln (n)\big).$$

Similarly, the recurrence equation for  
$\E\big[(X_{n,2}^{(\theta)})^2\big]$ involves $\E\big[(X_{n-1,1}^{(\theta)})^2\big]$, as well as $\E\big[X_{n-1,1}^{(\theta)}X_{n-1,2}^{(\theta)}\big]$, which are both available now.
Again, the equation is of the form~(\ref{Eq:standard}) 
(with $h_n$ specified only asymptotically). This gives the solution  
 $$\E\big[(X_{n,2}^{(\theta)})^2\big] = \frac {(\theta -1)^2}{\theta^4}\,n^2  + \frac {(22\theta^4-30\theta^3 + 20 \theta^2 -7\theta +1)(\theta-1)^2}{\theta^4(2\theta-1)^3}\,n +O\big(\ln (n)\big).$$
 
Toward the variance, from the second moments we subtract the expectations of the squares of
the first moments, and toward the covariance, we subtract $\E[X_{n,1}^{(\theta)}]\, \E[X_{n,2}^{(\theta)}]$ from the mixed moment. 
Huge cancellations take place, removing the $n^2$ term from the expression and leaving the covariance matrix convergence as stated.
We relegate all the details to the appendix.
\end{proof} 
\subsection{Concentration laws}
We determine approximations of 
$X_{n,1}^{(\theta)}$ and $X_{n,2}^{(\theta)}$ by the leading asymptotic
equivalents of their means. Errors in the $\OL$ sense\footnote{
		A sequence of random variables $Y_n$
		is $\OL  (g(n))$, when there exist a positive
		constant~$A$ and a positive integer $n_0$, such that $\E[|Y_n|] \le A |g(n)|$, for all $n \ge n_0$.} are of lower order.
\begin{lemma}
	\label{Lm:Lconcentration}
As $n\to\infty$, we have the asymptotic approximation
$$
\begin{pmatrix}
  X_{n,1}^{(\theta)}\\ \\
     X_{n,2}^{(\theta)}
   \end{pmatrix}  =  
   \begin{pmatrix}\displaystyle \frac n \theta  +  \OL \big(\sqrt n\, \big)\\
   \\ \displaystyle
   \frac {(\theta-1) n} {\theta^2}
       +  \OL \big(\sqrt n\, \big)
       \end{pmatrix}.
$$
\end{lemma}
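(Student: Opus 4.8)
The plan is to obtain this lemma as a direct consequence of the first- and second-order moment asymptotics already in hand, by controlling the $\mathcal{L}_1$ distance between each $X_{n,i}^{(\theta)}$ and its deterministic target through a bias-plus-fluctuation decomposition. The key elementary inequality is that, for any square-integrable random variable $Y$ and any constant $c$, the triangle inequality followed by Jensen's inequality (equivalently Cauchy--Schwarz) gives
$$\E\big[|Y - c|\big] \le \E\big[|Y - \E[Y]|\big] + \big|\E[Y] - c\big| \le \sqrt{\V[Y]} + \big|\E[Y] - c\big|.$$

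I would apply this first to $Y = X_{n,1}^{(\theta)}$ with $c = n/\theta$. The fluctuation term is controlled by Theorem~\ref{Theo:variance}: since $\frac 1 n \V\big[X_{n,1}^{(\theta)}\big]$ converges to the finite $(1,1)$ entry of the limiting covariance matrix, we have $\V\big[X_{n,1}^{(\theta)}\big] = O(n)$, whence $\sqrt{\V[X_{n,1}^{(\theta)}]} = O(\sqrt n)$. The bias term is controlled by Proposition~\ref{Prop:mean}, which gives $\E\big[X_{n,1}^{(\theta)}\big] = n/\theta + 1 + O(1/n^{\theta-1})$, so that $\big|\E[X_{n,1}^{(\theta)}] - n/\theta\big| = O(1) = O(\sqrt n)$. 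Adding the two contributions yields $\E\big[|X_{n,1}^{(\theta)} - n/\theta|\big] = O(\sqrt n)$, which is precisely the asserted $X_{n,1}^{(\theta)} = n/\theta + \OL(\sqrt n)$.

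The second coordinate is treated identically, taking $Y = X_{n,2}^{(\theta)}$ and $c = (\theta-1)n/\theta^2$. Theorem~\ref{Theo:variance} yields $\V\big[X_{n,2}^{(\theta)}\big] = O(n)$ from the $(2,2)$ entry, so again $\sqrt{\V[X_{n,2}^{(\theta)}]} = O(\sqrt n)$; while Proposition~\ref{Prop:mean} gives $\E\big[X_{n,2}^{(\theta)}\big] = (\theta-1)n/\theta^2 + (\theta-1)/\theta + O(\ln(n)/n^{\theta-1})$, so the bias is once more $O(1)$. Combining the two estimates delivers the stated $\OL(\sqrt n)$ bound.

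The proof is essentially bookkeeping once the moment estimates are granted, so there is no genuine obstacle; the only point requiring care is to confirm that every lower-order term in the two means of Proposition~\ref{Prop:mean} --- the additive constants $1$ and $(\theta-1)/\theta$ together with the $O(1/n^{\theta-1})$ and $O(\ln(n)/n^{\theta-1})$ remainders --- is $o(\sqrt n)$ and hence absorbed into the $\OL(\sqrt n)$ error. This holds since each such term is $O(1)$ for $\theta \ge 2$. One should likewise note that the $\OL$ bound must hold uniformly with a single constant for all sufficiently large $n$, which is automatic because the $O(n)$ variance and the $O(1)$ bias each carry explicit constants.
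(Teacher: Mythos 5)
Your proposal is correct and follows essentially the same route as the paper: both arguments bound $\E\big[|X_{n,i}^{(\theta)}-c|\big]$ by combining the $O(n)$ variance from Theorem~\ref{Theo:variance} with the $O(1)$ bias from Proposition~\ref{Prop:mean} and then invoking Jensen's inequality. The only cosmetic difference is that the paper expands $\E\big[(X_{n,1}^{(\theta)}-n/\theta)^2\big]$ as variance plus squared bias and square-roots once, whereas you apply the triangle inequality first and square-root only the variance term; the bounds are the same.
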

\begin{proof}
	From the asymptotics of the mean and variance, as given in 
	Proposition~\ref{Prop:mean} and Theorem~\ref{Theo:variance},
	we have
	\begin{align*}
	\E\Big[\Big(X_{n,1}^{(\theta)} -  \frac n \theta \Big)^2\Big] 
	&= \E\Big[\Big(\big(X_{n,1}^{(\theta)} 
	    -  \E\big[X_{n,1}^{(\theta)}\big]\big) + \Big(\E\big[X_{n,1}^{(\theta)}\big] - \frac n \theta\Big)\Big) ^2\Big]\\
	&=\V\big[X_{n,1}^{(\theta)}\big] + \Big(\E\big[X_{n,1}^{(\theta)}\big] -\frac n \theta \Big)^2\\
	&= O(n).
	\label{Eq:Lonenorm}
	\end{align*}
	So, by Jensen's inequality
	$$\E\Big[\Big | X_{n,1}^{(\theta)} - \frac n \theta \Big |\Big]
	\le \sqrt {\E\Big[\Big(X_{n,1}^{(\theta)}-  \frac n \theta  \Big)^2\Big] }
	=  O \big(\sqrt n\, \big).$$
	It follows that
	$$X_{n,1}^{(\theta)} =  \frac n \theta  +  \OL \big(\sqrt n\, \big).$$
	The proof of the asymptotic approximation for $X_{n,2}^{(\theta)}$
	is quite similar. 
\end{proof}
\begin{corollary}
As $n\to\infty$, we have
$$\frac 1 n \begin{pmatrix}
   X_{n,1}^{(\theta)}\\ \\
     X_{n,2}^{(\theta)}
   \end{pmatrix} \inprob 
   \begin{pmatrix}\displaystyle
   \frac 1 \theta\\ \\
   \displaystyle
     \frac {\theta-1} {\theta^2}
   \end{pmatrix}.$$
\end{corollary}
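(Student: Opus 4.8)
The plan is to read off the convergence in probability directly from the $\mathcal{L}_1$ concentration already established in Lemma~\ref{Lm:Lconcentration}, so the corollary requires almost no new computation. First I would take the two component approximations from that lemma, namely $X_{n,1}^{(\theta)} = \frac n \theta + \OL(\sqrt n)$ and $X_{n,2}^{(\theta)} = \frac{(\theta-1)n}{\theta^2} + \OL(\sqrt n)$, and divide each through by $n$. Since an $\OL(\sqrt n)$ quantity $Y_n$ satisfies $\E[|Y_n|] \le A\sqrt n$ for some constant $A$ and all large $n$, dividing by $n$ yields $\E\big[|\frac 1 n Y_n|\big] \le A/\sqrt n$, so the scaled error is $\OL\big(1/\sqrt n\,\big)$. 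Hence
$$\frac 1 n X_{n,1}^{(\theta)} = \frac 1 \theta + \OL\Big(\frac 1 {\sqrt n}\Big), \qquad \frac 1 n X_{n,2}^{(\theta)} = \frac {\theta-1}{\theta^2} + \OL\Big(\frac 1 {\sqrt n}\Big).$$

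Next I would convert the $\mathcal{L}_1$ statement into convergence in probability. Because $\E\big[\big|\frac 1 n X_{n,1}^{(\theta)} - \frac 1 \theta\big|\big] \le A/\sqrt n \to 0$, Markov's inequality gives, for every fixed $\varepsilon > 0$,
$$\Prob\Big(\Big|\frac 1 n X_{n,1}^{(\theta)} - \frac 1 \theta\Big| > \varepsilon\Big) \le \frac 1 \varepsilon\, \E\Big[\Big|\frac 1 n X_{n,1}^{(\theta)} - \frac 1 \theta\Big|\Big] \longrightarrow 0,$$
so $\frac 1 n X_{n,1}^{(\theta)} \inprob \frac 1 \theta$, and the identical argument applied to the second component gives $\frac 1 n X_{n,2}^{(\theta)} \inprob \frac{\theta-1}{\theta^2}$.

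Finally, I would upgrade these two marginal statements to joint convergence of the vector. This is the one point requiring a word of justification rather than a bare citation: for a two-dimensional sequence, convergence in probability of each coordinate to its limit is equivalent to convergence in probability of the vector, since the event $\{\|\frac 1 n \vecX_n - \vecv\| > \varepsilon\}$ is contained in the union of the two coordinate events $\{|\frac 1 n X_{n,i}^{(\theta)} - v_i| > \varepsilon/\sqrt 2\}$, each of which has probability tending to $0$; a union bound then finishes the argument. I do not expect any genuine obstacle here, as the heavy lifting (the mean and variance asymptotics feeding Lemma~\ref{Lm:Lconcentration}) is already done; the only care needed is to confirm that the $\sqrt n$ error scales to a vanishing $1/\sqrt n$ under the normalization and that $\mathcal{L}_1$ convergence implies convergence in probability.
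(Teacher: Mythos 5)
Your proposal is correct and matches the paper's intended derivation: the corollary is stated as an immediate consequence of Lemma~\ref{Lm:Lconcentration}, obtained exactly as you do by dividing the $\OL(\sqrt n\,)$ approximations by $n$ and applying Markov's inequality, with the coordinatewise-to-joint upgrade being routine. No gaps.
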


\subsection{Martingalization}
We perform a martingale transform on  $X_{n,1}^{(\theta)}$. 
Let $M_n^{(\theta)} = r_n^{(\theta)} X_{n,1}^{(\theta)}+ s_n^{(\theta)}$, for deterministic, but yet-to-be specified, factors $r_n^{(\theta)}$
and $s_n^{(\theta)}$ that render~$M_n^{(\theta)}$ a martingale. Toward
such martingalization, using~(\ref{Eq:X1}), we write
\begin{align*}
\E\big[M_n^{(\theta)}\,\big |\,  \field_{n-1}\big] 
    &= \E\big[r_n^{(\theta)} X_{n,1}^{(\theta)}
    + s_n^{(\theta)}\,\big |\field_{n-1}\big]  \\
    &= \Big(1-\frac{\theta-1}{\tau_{n-1}^{(\theta)}}\Big) r_n ^{(\theta)}
     X_{n-1,1}^{(\theta)}+r_n ^{(\theta)}+ s_n ^{(\theta)}\\
            &= M_{n-1}^{(\theta)}\\
            &= r_{n-1}^{(\theta)} X_{n-1,1}^{(\theta)} + s_{n-1}^{(\theta)}.                 
\end{align*}
This is possible, if
\begin{align*}
 r_{n-1}^{(\theta)} &= \Big(1 - \frac{\theta-1} {\tau_{n-1}^{(\theta)}}\Big) r_n^{(\theta)}; \\
 s_{n-1} ^{(\theta)}&=  r_n^{(\theta)} + s_n^{(\theta)}. 
\end{align*}
The factor $r_n^{(\theta)}$ should satisfy the recurrence                                
$$ r_n^{(\theta)} = \Big(\frac {\tau_{n-1}^{(\theta)}} {\tau_{n-1}^{(\theta)}
 -\theta+1} \Big) r_{n-1} 
   =  \Big(\frac {n+\theta-1} n \Big)r_{n-1}^{(\theta)} , $$
   which unwinds into
 $$ r_n^{(\theta)} = \frac {(n+\theta-1) (n+\theta-2)  \cdots \theta}{n(n-1)\times \cdots
    \times 1}\, r_0^{(\theta)}  = \frac {\Gamma(n+\theta)}
        {\Gamma (\theta)\, \Gamma(n+1)}\, r_0^{(\theta)}, $$
        for any arbitrary $r_0^{(\theta)}\in \mathbb R$;   
        for simplicity, we take $r_0^{(\theta)} =1$.

The factor $s_n^{(\theta)}$ should satisfy the  recurrence 
$$s_n^{(\theta)} =  s_{n-1}^{(\theta)} - r_{n}^{(\theta)}  ,$$
which unwinds into
$$s_n^{(\theta)} = s_0^{(\theta)}- \sum_{i=1}^{n} r_i^{(\theta)},$$
for any arbitrary $s_0^{(\theta)} \in \mathbb R$; 
we take $s_0^{(\theta)} = 0$.
  Using the identity~(\ref{Eq:ratio}) once again, 
we simplify $s_n^{(\theta)}$ to
$$s_n^{(\theta)} = - \frac { \Gamma(n+\theta+1)}{\theta\, \Gamma(\theta)\,\Gamma(n+1) }
    +1.$$

Thus, we have
$$ M_n^{(\theta)} = \frac {\Gamma(n+\theta)}
        {\Gamma (\theta)\, \Gamma(n+1)} \, X_{n,1}^{(\theta)} - \frac {\Gamma(n+\theta+1)}{\theta\,\Gamma(\theta)\,\Gamma(n+1)} + 1$$
        is a martingale.
        
Asymptotics of  $r_n^{(\theta)}$, $s_n^{(\theta)}$ and their backward
differences are useful in the ensuing analysis.
 \begin{lemma}
 \label{Lm:rs}
 As $n\to\infty$, we have the asymptotics:
 $$  r_n^{(\theta)} \sim    \frac {n^{\theta-1}}
        {\Gamma (\theta)},  
        \qquad r_n^{(\theta)}-r_{n-1}^{(\theta)} = O (n^{\theta-2});$$
        $$  s_n^{(\theta)} \sim    - \frac {n^\theta}
        {\theta \, \Gamma(\theta)},  
        \qquad  s_n^{(\theta)}-s_{n-1}^{(\theta)} = O (n^{\theta-1}).$$
 \end{lemma}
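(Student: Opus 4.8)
The plan is to read off the leading-order asymptotics directly from the closed forms of $r_n^{(\theta)}$ and $s_n^{(\theta)}$ via the Stirling ratio approximation~(\ref{Eq:Stirlingapprox}), and to obtain the two backward differences from the one-step recurrences already established in the martingalization, rather than by differencing the asymptotic expansions (which would force us to track an awkward cancellation of the leading terms).

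First I would establish the leading asymptotics. For $r_n^{(\theta)} = \Gamma(n+\theta)/\bigl(\Gamma(\theta)\,\Gamma(n+1)\bigr)$, applying~(\ref{Eq:Stirlingapprox}) with $x=n$, $a=\theta$, and $b=1$ gives $\Gamma(n+\theta)/\Gamma(n+1) \sim n^{\theta-1}\bigl(1+O(1/n)\bigr)$, whence $r_n^{(\theta)} \sim n^{\theta-1}/\Gamma(\theta)$. Similarly, for $s_n^{(\theta)} = -\Gamma(n+\theta+1)/\bigl(\theta\,\Gamma(\theta)\,\Gamma(n+1)\bigr) + 1$, the choice $a=\theta+1$, $b=1$ yields $\Gamma(n+\theta+1)/\Gamma(n+1) \sim n^{\theta}\bigl(1+O(1/n)\bigr)$, so the dominant contribution is $-n^{\theta}/\bigl(\theta\,\Gamma(\theta)\bigr)$ and the additive constant $1$ is of lower order; thus $s_n^{(\theta)} \sim -n^{\theta}/\bigl(\theta\,\Gamma(\theta)\bigr)$.

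For the backward differences, the cleanest route is through the recurrences. From $r_n^{(\theta)} = \frac{n+\theta-1}{n}\, r_{n-1}^{(\theta)}$ I get immediately $r_n^{(\theta)}-r_{n-1}^{(\theta)} = \frac{\theta-1}{n}\, r_{n-1}^{(\theta)}$; substituting the bound $r_{n-1}^{(\theta)} = O(n^{\theta-1})$ from the first part yields $r_n^{(\theta)}-r_{n-1}^{(\theta)} = O(n^{\theta-2})$. Likewise, the defining recurrence $s_n^{(\theta)} = s_{n-1}^{(\theta)} - r_n^{(\theta)}$ gives $s_n^{(\theta)}-s_{n-1}^{(\theta)} = -r_n^{(\theta)} = O(n^{\theta-1})$ directly, again using the leading asymptotics of $r_n^{(\theta)}$.

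There is essentially no genuine obstacle here; the only point requiring care is that the backward differences should be computed from the recurrences rather than by subtracting the two Stirling expansions, since in the latter approach the leading $n^{\theta-1}$ (respectively $n^{\theta}$) terms cancel and one would then need the next-order term in~(\ref{Eq:Stirlingapprox}) to pin down the correct order. Routing the computation through the already-derived recurrences sidesteps this cancellation entirely and keeps the error bookkeeping trivial.
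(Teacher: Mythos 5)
Your proposal is correct and follows essentially the same route as the paper: the leading asymptotics of $r_n^{(\theta)}$ and $s_n^{(\theta)}$ come from the Stirling ratio approximation~(\ref{Eq:Stirlingapprox}), and the backward differences are obtained by factoring out the previous term — the paper writes $r_n^{(\theta)}-r_{n-1}^{(\theta)} = r_{n-1}^{(\theta)}\bigl(\tfrac{n-1+\theta}{n}-1\bigr)$ and $s_n^{(\theta)}-s_{n-1}^{(\theta)} = -\Gamma(n+\theta)/\bigl(\Gamma(\theta)\Gamma(n+1)\bigr) = -r_n^{(\theta)}$, which is exactly your use of the recurrences. Your closing caution about not differencing the two asymptotic expansions directly is well taken but does not change the substance.
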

 \begin{proof}      
          Examine the forms of 
          $r_n^{(\theta)}$ and $s_n^{(\theta)}$. Their asymptotic equivalents
          follow from the Stirling approximation in~(\ref{Eq:Stirlingapprox}).
          
          Further, we have
\begin{align*}
r_n^{(\theta)} - r_{n-1}^{(\theta)} &=   
        \frac {\Gamma(n+\theta)}
        {\Gamma (\theta)\, \Gamma(n+1)} -  \frac {\Gamma(n-1+\theta)}
        {\Gamma (\theta)\, \Gamma(n)} \\
        &=  \frac {\Gamma(n-1+\theta)}
        {\Gamma (\theta)\, \Gamma(n)} \Big(\frac{n-1+\theta} n -1\Big)\\
        &\sim  \frac{\theta-1} {\Gamma (\theta)} \,n^{\theta-2} , 
        \end{align*}
        and
\begin{align*}
s_n^{(\theta)} - s_{n-1}^{(\theta)} &=   
       - \frac {\Gamma(n+\theta+1)}
       {\theta\,\Gamma(\theta)\,\Gamma(n+1)} +  \frac { \Gamma(n+\theta)}
       {\theta \,\Gamma(\theta)\,\Gamma(n)} \\
        &=  -\frac {\Gamma(n+\theta)}
        {\theta\,\Gamma(\theta)\, \Gamma(n)} \Big(\frac{n+\theta} {n} -1\Big)\\
        &=  -\frac {\Gamma(n+\theta)}
        {\Gamma(\theta)\, \Gamma(n+1)}\\
        &\sim  \frac {n^{\theta-1}}{\Gamma(\theta)}.
        \end{align*}
\end{proof}      
\begin{corollary}
\label{Cor:bounds}
For large enough positive constants $K_1, K_2$, 
and $K_3$, we have
$$  r_n^{(\theta)} \le K_1   n^{\theta-1},  
        \qquad r_n^{(\theta)}-r_{n-1}^{(\theta)} \le K_2 n^{\theta-2};$$
      $$ s_n^{(\theta)}-s_{n-1}^{(\theta)} = K_3n^{\theta-1},$$
      for all $n\ge 1$.
\end{corollary}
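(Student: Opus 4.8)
The plan is to upgrade the asymptotic equivalences established in Lemma~\ref{Lm:rs} into inequalities valid uniformly over all $n \ge 1$. The guiding principle is elementary: each of the three quantities in question, once divided by the indicated power of $n$, converges to a finite limit as $n \to \infty$; a convergent real sequence is bounded, and since every term of each sequence is finite for $n \ge 1$, the supremum over $n \ge 1$ of its absolute value is finite. Any constant at least as large as that supremum then serves as the claimed bound. Thus the corollary is a bookkeeping passage from limiting behaviour to a uniform estimate, not a new analytic fact.

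Concretely, for the first inequality I would invoke $r_n^{(\theta)} \sim n^{\theta-1}/\Gamma(\theta)$, which states precisely that $r_n^{(\theta)}/n^{\theta-1} \to 1/\Gamma(\theta)$. The sequence $\bigl(r_n^{(\theta)}/n^{\theta-1}\bigr)_{n \ge 1}$ is therefore bounded, and since $r_n^{(\theta)} > 0$ for every $n$ (being a ratio of Gamma functions of positive arguments), setting $K_1 := \sup_{n \ge 1} r_n^{(\theta)}/n^{\theta-1} < \infty$ yields $r_n^{(\theta)} \le K_1 n^{\theta-1}$ for all $n \ge 1$, with no absolute values needed. The second inequality follows the same template applied to the backward difference, using $r_n^{(\theta)} - r_{n-1}^{(\theta)} \sim (\theta-1)\, n^{\theta-2}/\Gamma(\theta)$ from Lemma~\ref{Lm:rs}: the ratio $\bigl(r_n^{(\theta)} - r_{n-1}^{(\theta)}\bigr)/n^{\theta-2}$ converges, hence is bounded, and $K_2$ is taken to be (at least) its supremum over $n \ge 1$.

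For the third statement I would exploit the identity extracted inside the proof of Lemma~\ref{Lm:rs}, namely $s_n^{(\theta)} - s_{n-1}^{(\theta)} = -\,r_n^{(\theta)}$, which reduces the claim directly to the already-established control on $r_n^{(\theta)}$. Taking $K_3 := K_1$ then gives $\bigl|s_n^{(\theta)} - s_{n-1}^{(\theta)}\bigr| = r_n^{(\theta)} \le K_3\, n^{\theta-1}$ for all $n \ge 1$, so no separate supremum argument is even required for this line.

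There is essentially no obstacle of substance. The only point deserving a word of care is the transition from ``for large $n$'' (where the Stirling approximation underlying Lemma~\ref{Lm:rs} operates) to ``for all $n \ge 1$'': the asymptotic equivalence controls the tail, while the finitely many initial indices $n = 1, \ldots, n_0$ are absorbed by enlarging each constant, each such term being finite. I would also note that, since $\theta \ge 2$, the exponents $\theta-1$ and $\theta-2$ are nonnegative, so the estimates are of the stated polynomial orders and, in the boundary case $\theta = 2$, collapse to constant bounds on the two differences.
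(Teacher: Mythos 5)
Your argument is correct and is exactly the route the paper intends: the corollary is stated without a separate proof precisely because it follows from Lemma~\ref{Lm:rs} by the standard observation that a convergent sequence of ratios is bounded, with the finitely many initial indices absorbed into the constants (and your use of the identity $s_n^{(\theta)}-s_{n-1}^{(\theta)}=-r_n^{(\theta)}$ matches the computation inside the lemma's proof). Nothing is missing.
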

\subsection{Gaussian limit law}        
In this subsection, 
we obtain an asymptotic Gaussian law for $X_{n,1}^{(\theta)}$
by verifying the conditions of the martingale central limit theorem for 
$M_n^{(\theta)}$.
There are several sets of such conditions.
We use
conditional Lindeberg's condition and the conditional variance condition 
in~\cite{Hall}, pages 57--59. 

Conditional Lindeberg's condition 
requires that, for some positive sequence~$\xi_n^{(\theta)}$, 
and for any $\varepsilon>0$, we have
$$ U_n^{(\theta)} := \sum_{j = 1}^{n} 
\E \Big[ \Big( \frac{\nabla M_j^{(\theta)}}{\xi_n^{(\theta)}} \Big)^2 \indicator_
   {\big\{\big|\frac{\nabla M_j^{(\theta)}}{\xi_n^{(\theta)}} \big| > \varepsilon\big\}} \, \Big|  \, \field_{j - 1} \Big] \inprob  0,$$
and the conditional variance condition requires that, for some random variable 
$G_\theta \not \equiv 0$, we have
$$V_n^{(\theta)}:  = \sum_{j = 1}^{n} \E \Big[ \Big( \frac{\nabla M_j^{(\theta)}}{\xi_n^{(\theta)}} \Big)^2 \, \Big| \, \field_{j - 1} \Big] \inprob G_\theta.$$
When these conditions are satisfied, we get
$$\frac {M_n^{(\theta)}} {\xi_n^{(\theta)}} \ \convD 
\ \normal (0, G_\theta),$$
where the right-hand side is  a mixture of normally distributed random variates, with mixing variance $G_\theta$. In our case, we  
find out that $G_\theta$ is a constant, so the mixture has only
one normal random variate in it, which has a deterministic variance.
In the case of the number of vertices at
containment level 1, it turns out that 
$\xi_n^{(\theta)}$ is $n^{\theta - \frac 1 2}$.

The following uniform bound paves the way to the verification
of the two conditions of the martingale central limit theorem.  
\begin{lemma}
	\label{Lm:uniformbound}
	The absolute differences $|\nabla M_j^{(\theta)}|/ n^{\theta - 1}$ are  uniformly bounded in $j = 1, \ldots, n$.
\end{lemma}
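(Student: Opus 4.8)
The plan is to write $\nabla M_j^{(\theta)}$ in closed form and then exploit the martingale structure to exhibit a \emph{pathwise} (not merely in-expectation) bound of the correct order. First I would substitute the definition $M_j^{(\theta)} = r_j^{(\theta)} X_{j,1}^{(\theta)} + s_j^{(\theta)}$ into $\nabla M_j^{(\theta)} = M_j^{(\theta)} - M_{j-1}^{(\theta)}$ and eliminate $X_{j,1}^{(\theta)}$ through the stochastic recurrence~(\ref{Eq:stoch1}), $X_{j,1}^{(\theta)} = X_{j-1,1}^{(\theta)} - Q_{j,1}^{(\theta)} + 1$. Collecting terms yields
\begin{equation*}
\nabla M_j^{(\theta)} = \big(r_j^{(\theta)} - r_{j-1}^{(\theta)}\big) X_{j-1,1}^{(\theta)} - r_j^{(\theta)} Q_{j,1}^{(\theta)} + r_j^{(\theta)} + \big(s_j^{(\theta)} - s_{j-1}^{(\theta)}\big).
\end{equation*}

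The decisive step is to invoke the two defining recurrences of the martingalization factors. The relation $s_n^{(\theta)} = s_{n-1}^{(\theta)} - r_n^{(\theta)}$ gives $s_j^{(\theta)} - s_{j-1}^{(\theta)} = - r_j^{(\theta)}$, which cancels the two isolated scalar terms; and the relation $r_{n-1}^{(\theta)} = \big(1 - (\theta-1)/\tau_{n-1}^{(\theta)}\big) r_n^{(\theta)}$ gives $r_j^{(\theta)} - r_{j-1}^{(\theta)} = \frac{\theta-1}{\tau_{j-1}^{(\theta)}}\, r_j^{(\theta)}$. Since $\tau_{j-1}^{(\theta)} = j + \theta - 1$, the backward difference collapses to the clean product form
\begin{equation*}
\nabla M_j^{(\theta)} = r_j^{(\theta)} \left( \frac{\theta-1}{j+\theta-1}\, X_{j-1,1}^{(\theta)} - Q_{j,1}^{(\theta)} \right).
\end{equation*}

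From here the bound is immediate once the right a priori estimates are in place. The number $X_{j-1,1}^{(\theta)}$ of level-1 vertices cannot exceed the total vertex count $\tau_{j-1}^{(\theta)} = j + \theta - 1$, so the first summand inside the parentheses is at most $\theta - 1$ on every sample path; and $Q_{j,1}^{(\theta)}$ is a single coordinate of a hypergeometric sample of size $\theta - 1$, so $0 \le Q_{j,1}^{(\theta)} \le \theta - 1$ surely. By the triangle inequality the bracketed factor is bounded in modulus by $2(\theta-1)$, uniformly in $j$ and over the whole probability space. Multiplying by the bound $r_j^{(\theta)} \le K_1 j^{\theta-1}$ from Corollary~\ref{Cor:bounds} and using the monotonicity $j^{\theta-1} \le n^{\theta-1}$ for $1 \le j \le n$ (valid since $\theta \ge 2$) gives $|\nabla M_j^{(\theta)}| \le 2(\theta-1) K_1\, n^{\theta-1}$, that is, $|\nabla M_j^{(\theta)}|/n^{\theta-1} \le 2(\theta-1)K_1$, the asserted uniform bound.

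I do not expect a genuine obstacle: the martingale recurrences engineer the cancellation that removes all additive terms, and what remains reduces to the elementary deterministic caps $X_{j-1,1}^{(\theta)} \le \tau_{j-1}^{(\theta)}$ and $Q_{j,1}^{(\theta)} \le \theta - 1$. The only point deserving care is to note that both caps hold \emph{almost surely} rather than merely in mean, so that the resulting estimate is a sure bound---precisely the strength required when this lemma is fed into the Lindeberg and conditional-variance conditions of the martingale central limit theorem in the next step.
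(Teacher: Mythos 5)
Your proof is correct, and it follows the same essential strategy as the paper's: substitute the stochastic recurrence $X_{j,1}^{(\theta)} = X_{j-1,1}^{(\theta)} - Q_{j,1}^{(\theta)} + 1$ into $\nabla M_j^{(\theta)}$ and then bound everything pathwise using the sure caps $X_{j-1,1}^{(\theta)} \le \tau_{j-1}^{(\theta)}$ and $Q_{j,1}^{(\theta)} \le \theta-1$ together with the growth of $r_j^{(\theta)}$. The one genuine difference is in the decomposition: the paper applies the triangle inequality to four separate terms and controls $|r_j^{(\theta)}-r_{j-1}^{(\theta)}|$ and $|s_j^{(\theta)}-s_{j-1}^{(\theta)}|$ via the order bounds of Corollary~\ref{Cor:bounds} (the constants $K_2$ and $K_3$), whereas you feed the exact defining recurrences $s_j^{(\theta)}-s_{j-1}^{(\theta)}=-r_j^{(\theta)}$ and $r_j^{(\theta)}-r_{j-1}^{(\theta)}=\frac{\theta-1}{\tau_{j-1}^{(\theta)}}\,r_j^{(\theta)}$ back in, which collapses the difference to the single product $r_j^{(\theta)}\big(\tfrac{(\theta-1)X_{j-1,1}^{(\theta)}}{j+\theta-1}-Q_{j,1}^{(\theta)}\big)$ and needs only the bound on $r_j^{(\theta)}$ itself. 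Your version is tidier and gives an explicit constant $2(\theta-1)K_1$ (in fact the bracket is already bounded by $\theta-1$, since it is a difference of two quantities each lying in $[0,\theta-1]$); both arguments deliver the sure, uniform-in-$j$ bound that the Lindeberg verification requires.
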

\begin{proof}
	By the construction of the martingale, for each $ 1 \le j \le n $, we have
	\begin{align*}
	\big|\nabla M_j^{(\theta)}\big| &= \big|M_j^{(\theta)} - M_{j - 1}^{(\theta)}\big|\\ 
	&= \big|\big(r_j^{(\theta)} X_{j,1}^{(\theta)} + s_j^{(\theta)}\big) 
	   - \big(r_{j - 1}^{(\theta)}X_{j-1,1}^{(\theta)} + s_{j - 1}^{(\theta)}\big)\big| \\
&\le \big|r_j^{(\theta)} \big(X_{j-1,1}^{(\theta)}  - Q_{j,1}^{(\theta)} +1\big)
	   - r_{j - 1}X_{j-1,1}^{(\theta)}\big| + \big| s_j^{(\theta)} 
	     - s_{j - 1}^{(\theta)}\big|\\
	   &\le \big|r_j^{(\theta)} -r_{j-1}^{(\theta)}|\, \tau_{j-1}^{(\theta)}  + r_j ^{(\theta)} Q_{j,1}^{(\theta)} + r_j^{(\theta)}
 +  \big| s_j^{(\theta)}  - s_{j - 1}^{(\theta)}\big|.
	\end{align*}
	Recall that $Q_{n,1}^{(\theta)}$ is a hypergeometric random variable representing
	the number of vertices at containment level 1 in a sample
	of size $\theta -1$. Its maximum value is $\theta-1$. 
	
	From the bounds in~Corollary~\ref{Cor:bounds}, we obtain
	\begin{align*}
	\Big|\frac{\nabla M_j^{(\theta)}} {n^{\theta -1}}\Big| 
	   &\le  \frac 1 {n^{\theta - 1}} 
	      \big(K_2j^{\theta-2} \, \tau_{n-1}^{(\theta)}  
	         + (\theta-1) K_1 j^{\theta-1} 
                     + K_1 j^{\theta-1} + K_3n^{\theta -1} \big) \\
  &\le  \frac 1 {n^{\theta - 1}} 
	      \big(K_2n^{\theta-2} (n+\theta -1) 
         + \theta K_1 n^{\theta -1} \big) + K_3\\
          &= K_2  + \frac {(\theta-1)K_2}n+ \theta K_1 +K_3\\
          & \le  K_2  +(\theta-1)K_2+ \theta K_1 +K_3. 
	\end{align*}
\end{proof} 
\begin{lemma}
	\label{Lm:Lindeberg}
 For any $\varepsilon > 0$, we have
\begin{align*}
	U_n^{(\theta)} = \sum_{j = 1}^n \E \Big[ \Big( \frac{\nabla M_j^{(\theta)}}
	   {n^{\theta - \frac 1 2}} \Big)^2 \indicator_{\big\{\big|\frac{\nabla M_j^{(\theta)}}{n^{\theta-\frac 1 2 }} \big| > \varepsilon\big \}}\, \Big|\, \field_{j - 1} \Big] \inprob  0.
	\end{align*}
\end{lemma}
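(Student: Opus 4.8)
The plan is to exploit the uniform increment bound from Lemma~\ref{Lm:uniformbound} directly; once that bound is available, verifying Lindeberg's condition becomes essentially automatic, because the chosen scaling $n^{\theta - \frac 1 2}$ sits exactly one half-power above the increment order $n^{\theta-1}$.

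First I would invoke Lemma~\ref{Lm:uniformbound}, which furnishes a constant $B = K_2 + (\theta-1)K_2 + \theta K_1 + K_3$ (independent of both $j$ and $n$) such that $\big|\nabla M_j^{(\theta)}\big| \le B\, n^{\theta-1}$ for every $1 \le j \le n$. Dividing through by the scaling factor $n^{\theta - \frac 1 2}$ gives
$$\Big|\frac{\nabla M_j^{(\theta)}}{n^{\theta - \frac 1 2}}\Big| \le \frac{B\, n^{\theta-1}}{n^{\theta - \frac 1 2}} = \frac{B}{\sqrt n},$$
a bound that holds simultaneously for all $j$ in the range $1 \le j \le n$ and tends to $0$ as $n\to\infty$.

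The crucial observation is then that, for any fixed $\varepsilon > 0$, one may pick $n_0$ so large that $B/\sqrt n < \varepsilon$ for all $n \ge n_0$. For such $n$, the event $\big\{\big|\nabla M_j^{(\theta)}/n^{\theta - \frac 1 2}\big| > \varepsilon\big\}$ is empty for every $j$, so each indicator in the sum vanishes identically. Consequently every conditional expectation in $U_n^{(\theta)}$ is zero, whence $U_n^{(\theta)} = 0$ for all $n \ge n_0$. This yields convergence to $0$ in the deterministic (sure) sense, which is \emph{a fortiori} convergence in probability, establishing $U_n^{(\theta)} \inprob 0$.

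There is really no obstacle here, since the combinatorial labor was front-loaded into Lemma~\ref{Lm:uniformbound}. The only point to check carefully is the arithmetic of the exponents, namely that $(\theta-1) - (\theta - \tfrac 1 2) = -\tfrac 1 2$, so that the rescaled increments are uniformly $O(n^{-1/2})$. This degree gap is precisely what forces the Lindeberg indicators to switch off for large $n$, and it explains why the condition holds in the strong sense rather than merely in probability.
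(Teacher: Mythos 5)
Your proof is correct and is essentially identical to the paper's: both invoke Lemma~\ref{Lm:uniformbound} to bound the rescaled increments uniformly by a constant times $n^{-1/2}$, observe that for $n$ large the Lindeberg indicator sets are empty, and conclude that $U_n^{(\theta)}$ vanishes identically for large $n$, which is stronger than the required convergence in probability. Your explicit exponent bookkeeping is a nice touch but does not change the argument.
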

\begin{proof}
	By the uniform bound established in Lemma~\ref{Lm:uniformbound}, for every $\varepsilon>0$, 
	there exists a natural number $n_0 (\varepsilon) $, such that for all $ n \ge n_0 (\varepsilon)$, the sets 
	$\{|\nabla M_j^{(\theta)}  | > \varepsilon {n^{\theta-\frac 1 2}} \}$ are empty, which implies that the sequence $U_n$ converges almost surely to~$0$.
	This almost-sure convergence is stronger than the required in-probability convergence.
\end{proof}
\begin{lemma}
	\label{Lm:Variance}
	\begin{align*}
	V_n^{(\theta)} = \sum_{j = 1}^{n} \E \Big[ \Big( \frac{\nabla M_j^{(\theta)}}{n^{\theta -\frac 1 2}} \Big)^2 \, \Big | \, \field_{j - 1} \Big] \inprob  \frac{\theta-1}{\theta^2(2\theta-1)\, \Gamma^2(\theta)} .
	\end{align*}
\end{lemma}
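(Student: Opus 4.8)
The plan is to reduce $V_n^{(\theta)}$ to a deterministically weighted average of a random sequence that concentrates, and then to pass to the limit by a Toeplitz argument carried out in $\mathcal L_1$. First I would compute the martingale difference in closed form. Substituting the stochastic recurrence~(\ref{Eq:stoch1}) for $X_{j,1}^{(\theta)}$ into $M_j^{(\theta)} = r_j^{(\theta)}X_{j,1}^{(\theta)} + s_j^{(\theta)}$ and invoking the defining recurrences for $r_n^{(\theta)}$ and $s_n^{(\theta)}$, namely $r_j^{(\theta)}-r_{j-1}^{(\theta)} = (\theta-1)r_j^{(\theta)}/\tau_{j-1}^{(\theta)}$ and $s_j^{(\theta)}-s_{j-1}^{(\theta)} = -r_j^{(\theta)}$, the constant contributions cancel and I expect the clean form
\[
\nabla M_j^{(\theta)} = r_j^{(\theta)}\Big(\frac{(\theta-1)X_{j-1,1}^{(\theta)}}{\tau_{j-1}^{(\theta)}} - Q_{j,1}^{(\theta)}\Big).
\]
Since $(\theta-1)X_{j-1,1}^{(\theta)}/\tau_{j-1}^{(\theta)}$ is exactly $\E\big[Q_{j,1}^{(\theta)}\,\big|\,\field_{j-1}\big]$, the bracket is a centered $\field_{j-1}$-conditional random variable multiplied by the deterministic factor $r_j^{(\theta)}$, so
\[
\E\big[(\nabla M_j^{(\theta)})^2\,\big|\,\field_{j-1}\big] = \big(r_j^{(\theta)}\big)^2\,\V\big[Q_{j,1}^{(\theta)}\,\big|\,\field_{j-1}\big].
\]

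Next I would insert the hypergeometric variance formula from Section~\ref{Sec:notation}, with urn size $\tau_{j-1}^{(\theta)}=j-1+\theta$, color-$1$ count $X_{j-1,1}^{(\theta)}$, and sample size $\theta-1$. The factors simplify because $\tau_{j-1}^{(\theta)}-(\theta-1)=j$ and $\tau_{j-1}^{(\theta)}-1=j+\theta-2$, leaving the conditional variance proportional to $X_{j-1,1}^{(\theta)}\big(\tau_{j-1}^{(\theta)}-X_{j-1,1}^{(\theta)}\big)$. Dividing by $(n^{\theta-\frac12})^2=n^{2\theta-1}$ and summing, I would organize $V_n^{(\theta)}$ as
\[
V_n^{(\theta)} = \frac{1}{n^{2\theta-1}}\sum_{j=1}^n c_j\, W_j, \qquad W_j = \frac{X_{j-1,1}^{(\theta)}\big(\tau_{j-1}^{(\theta)}-X_{j-1,1}^{(\theta)}\big)}{\big(\tau_{j-1}^{(\theta)}\big)^2},
\]
where $c_j=(r_j^{(\theta)})^2(\theta-1)j/(j+\theta-2)$ is deterministic with $c_j\sim (\theta-1)j^{2\theta-2}/\Gamma^2(\theta)$ by Lemma~\ref{Lm:rs}, and where $W_j=(X_{j-1,1}^{(\theta)}/\tau_{j-1}^{(\theta)})(1-X_{j-1,1}^{(\theta)}/\tau_{j-1}^{(\theta)})\in[0,\tfrac14]$ is uniformly bounded. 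Using $\sum_{j\le n}j^{2\theta-2}\sim n^{2\theta-1}/(2\theta-1)$, the normalized weight total $n^{-(2\theta-1)}\sum_{j\le n}c_j$ converges to $(\theta-1)/\big((2\theta-1)\Gamma^2(\theta)\big)$.

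The heart of the argument, and the step I expect to be the main obstacle, is justifying the passage to the limit inside this randomly weighted sum. By the concentration corollary following Lemma~\ref{Lm:Lconcentration} we have $X_{j-1,1}^{(\theta)}/j\inprob 1/\theta$, and since $\tau_{j-1}^{(\theta)}/j\to1$ deterministically, the continuous mapping theorem gives $W_j\inprob (\theta-1)/\theta^2$; crucially, the uniform bound $W_j\le\tfrac14$ upgrades this via bounded convergence to $\mathcal L_1$ convergence, $\E\big|W_j-(\theta-1)/\theta^2\big|\to0$. The delicate point is that $V_n^{(\theta)}$ aggregates $n$ random summands, so in-probability concentration of each $W_j$ is by itself insufficient; it is the boundedness of the occupancy ratios that supplies the domination needed to control the aggregate error. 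I would therefore bound
\[
\E\Big|\frac{\sum_{j=1}^n c_j W_j}{\sum_{j=1}^n c_j} - \frac{\theta-1}{\theta^2}\Big| \le \frac{\sum_{j=1}^n c_j\,\E\big|W_j - \tfrac{\theta-1}{\theta^2}\big|}{\sum_{j=1}^n c_j} \longrightarrow 0,
\]
where the limit is the Toeplitz lemma applied to the null sequence $\E\big|W_j-(\theta-1)/\theta^2\big|$ with nonnegative weights $c_j$ whose partial sums diverge and in which no single term dominates (the low-index terms, where concentration is weakest, carry total weight $O(J_0^{2\theta-1})=o(n^{2\theta-1})$). Multiplying this weighted-average limit by the limiting normalized weight total yields $\mathcal L_1$, hence in-probability, convergence of $V_n^{(\theta)}$ to the constant asserted in the lemma.
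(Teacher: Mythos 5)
Your proof is correct, and it reaches the constant $\frac{(\theta-1)^2}{\theta^2(2\theta-1)\,\Gamma^2(\theta)}$ --- which is what the paper's own proof of this lemma also derives in its final display, and what Theorem~\ref{Theo:Smythe} requires for consistency with Theorem~\ref{Theo:variance}; the exponent on $\theta-1$ in the printed statement of the lemma is evidently a typographical slip, so do not be alarmed that your answer disagrees with the statement as displayed.

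Your route is genuinely different from, and tidier than, the paper's. The paper expands $(\nabla M_j^{(\theta)})^2$ as $(\nabla(r_j^{(\theta)}X_{j,1}^{(\theta)}))^2 + 2(\nabla(r_j^{(\theta)}X_{j,1}^{(\theta)}))\nabla s_j^{(\theta)} + (\nabla s_j^{(\theta)})^2$, computes the conditional expectation of each of the three pieces $A_j^{(\theta)}, B_j^{(\theta)}, D_j^{(\theta)}$ separately from the hypergeometric moments, substitutes the $\OL$ concentration of $X_{j-1,1}^{(\theta)}$ and the asymptotics of $r_j^{(\theta)}, s_j^{(\theta)}$, and then invokes ``huge cancellations'' to isolate the surviving $j^{2\theta-2}$ term. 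You perform that cancellation once and for all at the level of the increment itself: since $r_{j}^{(\theta)}-r_{j-1}^{(\theta)}=(\theta-1)r_j^{(\theta)}/\tau_{j-1}^{(\theta)}$ and $\nabla s_j^{(\theta)}=-r_j^{(\theta)}$, the difference collapses to $\nabla M_j^{(\theta)}=-r_j^{(\theta)}\big(Q_{j,1}^{(\theta)}-\E[Q_{j,1}^{(\theta)}\mid\field_{j-1}]\big)$, whence $\E[(\nabla M_j^{(\theta)})^2\mid\field_{j-1}]=(r_j^{(\theta)})^2\,\V[Q_{j,1}^{(\theta)}\mid\field_{j-1}]$ exactly --- as of course it must be, the conditional mean of a martingale difference being zero. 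This turns $V_n^{(\theta)}$ into a deterministic Toeplitz weighting of the bounded occupancy statistics $W_j=p_j(1-p_j)$ with $p_j=X_{j-1,1}^{(\theta)}/\tau_{j-1}^{(\theta)}$, and your passage to the limit (in-probability convergence of $W_j$ upgraded to ${\cal L}_1$ by boundedness, then the Toeplitz lemma on the null sequence $\E|W_j-\frac{\theta-1}{\theta^2}|$) is a cleaner and more fully justified error control than the paper's $\OL$ bookkeeping. What the paper's brute-force decomposition buys is only uniformity of method with the second-moment computations of Theorem~\ref{Theo:variance}; your version buys transparency and rigor at no extra cost.
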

\begin{proof}
	This is a rather lengthy calculation, but for the large part it goes in the same vein as the computations we encountered in the proof 
of Theorem~\ref{Theo:variance}. So, we only outline the salient features in the long 
chain of calculations.   
	
	Write 
	\begin{align*}
	V_n^{(\theta)} &= \frac{1}{n^{2\theta-1}}\sum_{j = 1}^{n} 
	          \E \big[\big(\nabla \big(r_j^{(\theta)}  X_{j,1}^{(\theta)}\big) 
	          + \nabla s_j^{(\theta)}\big)^2\big ]  \, \big|\, \field_{j - 1} \big]\\ 
	       &=  \frac{1}{n^{2\theta-1}} \sum_{j = 1}^{n} 
	       \E \big[\big(\nabla \big(r_j^{(\theta)} X_{j,1}^{(\theta)}\big)\big)^2 
	       + 2 \big(\nabla (r_j^{(\theta)} X_{j,1}^{(\theta)})\big) 
	       \nabla s_j^{(\theta)} + \big (\nabla  s_j^{(\theta)}\big)^2 \, 
	       \big|\, \field_{j - 1} \big]\\
	       &:= \frac{1}{n^{2\theta-1}}
	           \sum_{j = 1}^{n} \big(A_j^{(\theta)}
	               + B_j^{(\theta)} + D_j^{(\theta)}\big).
	\end{align*}

We take up each part separately, starting with	
		\begin{align*}
		A_j^{(\theta)} &:= \E \big[\big(\big(\nabla 
		     \big(r_j^{(\theta)} X_{j,1}^{(\theta)}\big)\big)^2  
		            \, \big|\, \field_{j - 1} \big] \\
		&= \E \big[\big(r_j^{(\theta)} X_{j,1}^{(\theta)} 
		   - r_{j-1}^{(\theta)} X_{j-1,1}^{(\theta)}\big)^2   \, \big|\, \field_{j - 1} \big]\\
		&= \big(r_j^{(\theta)}\big)^2\, \E\big[\big(X_{j,1}^{(\theta)}\big)^2 
		      \, |\,  \field_{j - 1}\big] + \big(r_{j - 1}^{(\theta)}\big)^2 
		      \big(X_{j-1,1}^{(\theta)}\big)^2\\
		       &\qquad\qquad  {} - 2 r_j^{(\theta)} r_{j - 1}^{(\theta)} X_{j-1,1}^{(\theta)} \, \E\big[X_{j,1}^{(\theta)} \, \big|\, \field_{j - 1}\big]\\
		      	&= \big(r_j^{(\theta)}\big)^2\, \E\big[\big(X_{j-1,1}^{(\theta)} - Q_{j,1}^{(\theta)} +1\big)^2 
		      \, \big | \, \field_{j - 1}\big] +\big( r_{j - 1}^{(\theta)}\big)^2 
		      \big(X_{j-1,1}^{(\theta)}\big)^2\\
		       & \qquad \qquad {} - 2 r_j^{(\theta)}
		            r_{j - 1} ^{(\theta)}  X_{j-1,1}^{(\theta)} \, 
		            \E\big[X_{j-1,1}^{(\theta)} 
               - Q_{j,1}^{(\theta)} +1 \, \big|\ \field_{j - 1}\big] ,
		\end{align*}
where we substituted the right-hand side 
of~(\ref{Eq:stoch1}) for $X_{j,1}^{(\theta)}$. 
Upon expanding, we get conditional expectations (given $\field_{j-1}$)
of both $(Q_{j,1}^{(\theta)})^2$ and $Q_{j,1}^{(\theta)}$. 
The variable $Q_{j,1}^{(\theta)}$ is 
Hypergeo$(\tau_{j-1}^{(\theta)}, X_{j-1,1}^{(\theta)}, \theta-1)$. 
The required
conditional expectations
are obtained from the hypergeometric distribution; see 
Section~\ref{Sec:notation}. 
 The second part is
\begin{align*}
B_j	^{(\theta)}   &= 
 \E \big[2 \big(\nabla (r_j^{(\theta)} X_{j,1}^{(\theta)})\big) 
	       \nabla s_j^{(\theta)}  \,   \big|\, \field_{j - 1} \big]\\
	       &= 2\, \E \big[ \big(r_j^{(\theta)} X_{j,1}^{(\theta)} - r_{j-1}^{(\theta)} X_{j-1,1}^{(\theta)})
	       \big(s_j^{(\theta)}-s_{j-1}^{(\theta)}\big)  \,  
	        \big|\, \field_{j - 1} \big]\\
	        &= 2\,  r_j^{(\theta)} \big(s_j^{(\theta)} - s_{j-1}^{(\theta)}\big)
	        \, \E\big[X_{j,1}^{(\theta)}\, |\, \field_{j-1}\big]
	       - 2 r_{j-1}^{(\theta)}  \big(s_j^{(\theta)}-s_{j-1}^{(\theta)}\big) X_{j-1,1}^{(\theta)}.
	\end{align*}
	
	The third part is
$$D_j^{(\theta)} =  
	       \E \big[\big (\nabla  s_j^{(\theta)}\big)^2 \, 
	       \big|\, \field_{j - 1} \big] = (s_j^{(\theta)}-s_{j-1}^{(\theta)})^2.$$
	
	We now put the three parts together and get 
	an expression for $V_n^{(\theta)}$ as a sum, in which 
	the summand is in terms of
 $r_j^{(\theta)}$ and $s_j^{(\theta)}$, and their backward differences, as well as $X_{j-1,1}^{(\theta)}$.
	 Toward simplified asymptotics, we use the asymptotic equivalents in Corollary~\ref{Lm:Lconcentration}
	for~$X_{j-1, 1}^{(\theta)}$, and for $r_j^{(\theta)}$, $s_j^{(\theta)}$,
	$\nabla r_j^{(\theta)}$, $\nabla s_j^{(\theta)}$, we use the asymptotics in 
	Lemma~\ref{Lm:rs}.

Huge cancellations take place, leaving
	\begin{align*}
	V_{n}^{(\theta)} &=  \frac 1 {n^{2\theta - 1}}
	     \Big( \sum_{j=1}^n   \frac{(\theta-1)^2}{\theta^2\, \Gamma^2(\theta)}  j^{2\theta-2} + \OL(j^{2\theta-3})\Big)\\ 
	 &=   \frac{(\theta-1)^2}{\theta^2(2\theta-1)\, \Gamma^2(\theta)} 
	          + \OL\Big(\frac 1 n\Big)\\
	          &\inL  \frac{(\theta-1)^2}{\theta^2(2\theta-1)\, \Gamma^2(\theta)}. 
	\end{align*}  
	This ${\cal L}_1$ convergence is stronger than the required 
	in-probability convergence.
\end{proof}

\begin{theorem}
\label{Theo:Smythe}
Let $X_{n,1}^{(\theta)}$ be the number of vertices in hyperrecursive tree
with hyperedges of size $\theta$ at age $n$. Then, as $n\to\infty$, 
we have
$$\frac {X_{n,1}^{(\theta)} - \frac n \theta} {\sqrt n}
\ \convD\ \normal \Big(0,  \frac{(\theta-1)^2}{\theta^2(2\theta-1)} \Big).$$
\end{theorem}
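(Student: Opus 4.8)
The plan is to feed the three ingredients already assembled---the uniform bound of Lemma~\ref{Lm:uniformbound}, the conditional Lindeberg condition of Lemma~\ref{Lm:Lindeberg}, and the conditional variance condition of Lemma~\ref{Lm:Variance}---into the martingale central limit theorem of Hall and Heyde (the version on pages 57--59 of~\cite{Hall}), with normalizing sequence $\xi_n^{(\theta)} = n^{\theta - \frac 1 2}$. Because the initial term $M_0^{(\theta)}$ is a fixed constant, $M_0^{(\theta)}/n^{\theta - \frac 1 2}\to 0$, so the theorem applies verbatim to $M_n^{(\theta)}$ itself and delivers
$$\frac{M_n^{(\theta)}}{n^{\theta - \frac 1 2}}\ \convD\ \normal\Big(0,\ \frac{(\theta-1)^2}{\theta^2(2\theta-1)\,\Gamma^2(\theta)}\Big),$$
the mixing variable $G_\theta$ being the deterministic constant identified in Lemma~\ref{Lm:Variance}, so that the limit is a genuine (unmixed) normal law.

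The remaining work is to transfer this statement from the martingale back to $X_{n,1}^{(\theta)}$ by inverting the martingale transform. The key observation is an exact algebraic identity between the deterministic factors: from the closed forms of $r_n^{(\theta)}$ and $s_n^{(\theta)}$, and since $\Gamma(n+\theta+1) = (n+\theta)\Gamma(n+\theta)$, one reads off $s_n^{(\theta)} = -\frac{n+\theta}{\theta}\, r_n^{(\theta)} + 1$. Substituting this into the definition of the martingale gives
$$M_n^{(\theta)} = r_n^{(\theta)} X_{n,1}^{(\theta)} + s_n^{(\theta)} = r_n^{(\theta)}\Big(X_{n,1}^{(\theta)} - \frac n \theta - 1\Big) + 1,$$
which pins the recentering precisely at $n/\theta$ (up to a harmless constant shift). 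Dividing by $n^{\theta - \frac 1 2}$ and rearranging yields
$$\frac{M_n^{(\theta)}}{n^{\theta - \frac 1 2}} = \frac{r_n^{(\theta)}}{n^{\theta-1}}\cdot\frac{X_{n,1}^{(\theta)} - \frac n \theta}{\sqrt n} - \frac{r_n^{(\theta)}}{n^{\theta-1}}\cdot\frac{1}{\sqrt n} + \frac{1}{n^{\theta - \frac 1 2}}.$$

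Finally I would invoke Slutsky's theorem. By Lemma~\ref{Lm:rs} the deterministic ratio $r_n^{(\theta)}/n^{\theta-1}\to 1/\Gamma(\theta)$, while the last two summands tend to $0$ (for $\theta\ge 2$ the exponent $\theta - \frac 1 2$ is positive). Hence $\frac{1}{\Gamma(\theta)}\cdot\frac{X_{n,1}^{(\theta)} - n/\theta}{\sqrt n}$ shares the Gaussian limit of $M_n^{(\theta)}/n^{\theta - \frac 1 2}$, and multiplying through by the constant $\Gamma(\theta)$ rescales the variance by $\Gamma^2(\theta)$, cancelling the $\Gamma^2(\theta)$ in the denominator and producing exactly $\normal\big(0,\, (\theta-1)^2/(\theta^2(2\theta-1))\big)$. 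I anticipate no genuine obstacle, since the analytic substance resides in Lemmas~\ref{Lm:Lindeberg} and~\ref{Lm:Variance}; the only points demanding care are the exact identity $s_n^{(\theta)} = -\frac{n+\theta}{\theta}\, r_n^{(\theta)} + 1$, which guarantees that the recentering lands on $n/\theta$ rather than on the merely asymptotically equivalent $\E[X_{n,1}^{(\theta)}]$, and the clean bookkeeping of the vanishing lower-order terms under Slutsky's theorem.
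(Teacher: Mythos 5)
Your proposal is correct and follows essentially the same route as the paper: invoke the Hall--Heyde martingale CLT via Lemmas~\ref{Lm:Lindeberg} and~\ref{Lm:Variance} with $\xi_n^{(\theta)} = n^{\theta-\frac 1 2}$, rewrite $M_n^{(\theta)}$ so that the recentering is exactly $r_n^{(\theta)}(X_{n,1}^{(\theta)} - \frac n\theta)$ plus vanishing deterministic terms (the paper uses $\Gamma(n+\theta+1) = (n+\theta)\Gamma(n+\theta)$, which is precisely your identity $s_n^{(\theta)} = -\frac{n+\theta}{\theta}r_n^{(\theta)}+1$), and finish with Slutsky and the limit $r_n^{(\theta)}/n^{\theta-1}\to 1/\Gamma(\theta)$ to cancel the $\Gamma^2(\theta)$ in the variance.
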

\begin{proof}
Having checked the conditions for the martingale central limit theorem,
we can ascertain that
\begin{align*}
\frac {M_n^{\theta}} {n^{\theta - \frac 1 2}} &=
\frac { \frac {\Gamma(n+\theta)}
        {\Gamma (\theta)\, \Gamma(n+1)} \, X_{n,1}^{(\theta)} - \frac {\Gamma(n+\theta+1) }{\theta\,\Gamma(\theta)\,\Gamma(n+1)}+1} {n^{\theta - \frac 1 2}}  \\
&= \frac { \frac {\Gamma(n+\theta)}
        {\Gamma (\theta)\, \Gamma(n+1)} \, X_{n,1}^{(\theta)} - \frac {\Gamma(n+\theta)} {\Gamma(\theta) \, \Gamma(n+1)}\big(\frac{n}{\theta}\big)- \frac {\Gamma(n+\theta) }{\Gamma(\theta)\,\Gamma(n+1))}+1} {n^{\theta - \frac 1 2}} \\
        & \convD\ \normal \Big(0,  \frac{(\theta-1)^2}{\theta^2(2\theta-1)\, \Gamma^2(\theta)} \Big).
\end{align*}
      We have $\big(- \frac {\Gamma(n+\theta) }{\,\Gamma(n+1)\Gamma(\theta)}+1\big) \big( n^{\frac 1 2 -\theta } \big)\to 0$, and so
 an application of Slutsky theorem~\cite{Slutsky} allows us to remove this term.
   By the Stirling approximation in~(\ref{Eq:Stirlingapprox}), we have 
   $$ \frac  {n^{\theta-1}\Gamma(n+1)} {\Gamma(n+\theta)}\to 1.$$
        An application of Slutsky theorem~\cite{Slutsky} yields       
$$
\frac { \frac 1
        {\Gamma (\theta)} \,( X_{n,1}^{(\theta)} - \frac n \theta)} {\sqrt n} 
   \convD\ N \Big(0,  \frac{(\theta-1)^2}
          {\theta^2(2\theta-1)\, \Gamma^2(\theta)} \Big),$$
          which is an equivalent statement to the one given in the theorem.
\end{proof}
\begin{remark}
In the very special case $\theta=2$,
the hyperrecursive tree is the standard uniform recursive tree.
In this case, $X_{n,1}^{(2)}$ is just a count of the leaves in the tree. 
Theorem~\ref{Theo:Smythe} recovers the 
result in~\cite{MahmoudSmythe012}
and generalizes it.
\end{remark}

\newpage
\section*{Appendix}
\begingroup
\allowdisplaybreaks
To 
find the variance of $X_{n,1}^{(\theta)}$, we need to solve first for $\E [\big(X_{n,1}^{(\theta)}\big)^2]$. We can do so through the following recurrence equation:
\begin{align*}
  \E \big[(X_{n,1}^{(\theta)} )^{2} \big] &= \E \big[\big( X_{n-1,1}^{(\theta)}-Q_{n,1}^{(\theta)}+1 \big)^2  \big] \\ 
&=\E \big[\big( X_{n-1,1}^{(\theta)}\big)^2+ \big(Q_{n,1}^{(\theta)}\big)^2 -2X_{n-1,1}^{(\theta)}Q_{n,1}^{(\theta)}-2Q_{n,1}^{(\theta)}+2X_{n-1,1}^{(\theta)}+1\big]\\
&=\E \big[\big( X_{n-1,1}^{(\theta)})^2 \big]+ \E \big[\big(Q_{n,1}^{(\theta)}\big)^2 \big] -2\,\E \big[X_{n-1,1}^{(\theta)}Q_{n,1}^{(\theta)} \big]\\
&\qquad{}-2\E \big[Q_{n,1}^{(\theta)} \big]+2\E \big[X_{n-1,1}^{(\theta)}\big]+1 \\
&=\E \big[\big( X_{n-1,1}^{(\theta)})^2 \big]+  \E \big[\E \big[\big(Q_{n,1}^{(\theta)}\big)^2 \given \field_{n-1}\big]\big] -2 \E \big[\E \big[X_{n-1,1}^{(\theta)}Q_{n,1}^{(\theta)}\given\field_{n-1}] \big]\\
& \qquad {} - 2 \,\E \big[\E \big[Q_{n,1}^{(\theta)} \given\field_{n-1}\big]\big]
     +2\, \E \big[X_{n-1,1}^{(\theta)}\big]+1 \nonumber\\
&= \E \big[\big( X_{n-1,1}^{(\theta)}\big)^2 \big] \nonumber\\
&\quad {}+\E \Big[\frac{X_{n-1,1}^{(\theta)}(\theta-1)(n+\theta-1-(\theta-1))(n+\theta-1-X_{n-1,1}^{(\theta)})}{(n+\theta-1)^2 (n+\theta-1-1)} \\
&\qquad\quad\quad {} +\Big(\frac{X_{n-1,1}(\theta -1)}{(n+\theta-1)}\Big)^2\Big] \\
&\qquad\quad\quad {} -\Big(\frac{2(\theta-1)}{n+\theta-1}\Big)\E \big[\big(X_{n-1,1}^{(\theta)}\big)^2 \big] 
-\Big(\frac{2(\theta-1)}{n+\theta-1}\Big)\, \E \big[X_{n-1,1}^{(\theta)} \big] \\
&\qquad\quad\quad {}+2\,\E \big[X_{n-1,1}^{(\theta)}\big]+1 \\
&=\frac{n(n-1)}{(n+\theta-1)(n+\theta-2)} \,  \E \big[\big( X_{n-1,1}^{(\theta)}\big)^2 \big] \\
& \qquad \qquad\qquad {}+ \Big(\frac{n(2n+3\theta-5)}{(n+\theta-1)(n+\theta-2)}\Big)\, \E \big[X_{n-1,1}^{(\theta)}\big]+1.
\end{align*}

Plugging in $\E [ X_{n-1,1}^{(\theta)}]=\frac{(n-1)}{\theta} + 1 +O(n^{-\theta+1})$ , we obtain 
\begin{align*}
 \E \big[(X_{n,1}^{(\theta)} )^{2} \big]&= \frac{n(n-1)}{(n+\theta-1)(n+\theta-2)}\, \, \E \big[\big( X_{n-1,1}^{(\theta)}\big)^2 \big]  \\
&\qquad {}+ \Big(\frac{n(2n+3\theta-5)}{(n+\theta-1)(n+\theta-2)}\Big)\Big(\frac{n-1}{\theta}+1+O\big(n^{1-\theta}\big) \Big)+1 \\
&=  \frac{n(n-1)}{(n+\theta-1)(n+\theta-2)}\, \E \big[\big( X_{n-1,1}^{(\theta)}\big)^2 \big] +\frac{2}{\theta}\,n+\frac{2\theta-1}{\theta}+O\Big(\frac 1 n \Big).
\end{align*}

Take $g_n=\frac{n(n-1)}{(n+\theta-1)(n+\theta-2)}$, and 
$h_n=\frac{2}{\theta}n+\frac{2\theta-1}{\theta}+O\big(\frac 1 n\big) $ 
in~(\ref{Eq:standard}). Under the initial condition 
$\E[(X_{0,1}^{(\theta)})^2]=\theta^2$, we can solve the  
preceding recurrence:
\begin{align*}
 \E \big[(X_{n,1}^{(\theta)} )^{2} \big]&= \theta^2\prod_{i=1}^{n}\frac{i(i-1)}{(i+\theta-1)(i+\theta-2)}\\
&\quad {}+  \sum_{i=1}^{n} \Big(\prod_{j=i+1}^{n} \frac{j(j-1)}{(j+\theta-1)(j+\theta-2)}\Big) \Big(\frac{2}{\theta}i+\frac{2\theta-1}{\theta}+O\Big(\frac{1}{i} \Big) \Big) \nonumber\\
&=\theta^2\Big(\frac{\Gamma(n+1)}{\Gamma(n+\theta) / \Gamma(\theta+1)} \Big)\Big(\frac{\Gamma(n)}{\Gamma(n+\theta-1) / \Gamma(\theta)} \Big) \nonumber\\
&\qquad {}+ \sum_{i=1}^{n} \Big(\frac{\Gamma(n+1)/\Gamma(i+1)}{\Gamma(n+\theta)/\Gamma(i+\theta)} \Big) \Big(\frac{\Gamma(n)/\Gamma(i)}{\Gamma(n+\theta-1)/\Gamma(i+\theta-1)} \Big) \nonumber\\
&\qquad \qquad\qquad {}\times \Big(\frac{2}{\theta}i+\frac{2\theta-1}{\theta}+O\Big(\frac{1}{i} \Big)  \Big)\nonumber\\
&=  O\big(n^{2-2\theta}\big)+ \Big(\frac{\Gamma(n+1)}{\Gamma(n+\theta)} \Big)^2 \Big(\frac{n+\theta-1}{n}\Big) \sum_{i=1}^{n} \Big(\frac{\Gamma(i+\theta)}{\Gamma(i+1)} \Big)^2 \nonumber\\
&\qquad {}\times \Big(\frac{i}{i+\theta-1}\Big) \Big(\frac{2}{\theta}i+\frac{2\theta-1}{\theta}+O\Big(\frac{1}{i} \Big) \Big)   \nonumber\\
&=  O\big(n^{2-2\theta}\big)+\Big(n^{1-\theta}+\frac{(1-\theta)(\theta)}{2} n^{-\theta} +  O\big(n^{-\theta-1}\big)\Big)^2  \Big(\frac{n+\theta-1}{n}\Big) \nonumber\\
&\qquad  {} \times  \sum_{i=1}^{n}\Big(i^{\theta-1}+\frac{\theta(\theta-1)}{2} i^{\theta-2} +  O\big(i^{\theta-3}\big)\Big)^2 \Big(\frac{i}{i+\theta-1}\Big) \nonumber\\
&\qquad\qquad\qquad {} \times \Big(\frac{2}{\theta}i+\frac{2\theta-1}{\theta}+O\Big(\frac{1}{i} \Big)\Big)  \nonumber\\
&=  O\big(n^{2-2\theta}\big)+\Big(n^{2-2\theta} + \theta(1-\theta)n^{1-2\theta} + O\big(n^{-2\theta}\big)\Big) \Big( \frac{n+\theta-1}{n}\Big) \nonumber\\
&\qquad {} \times  \sum_{i=1}^{n}\Big(\frac{2}{\theta}i^{2\theta-1}+\frac{2\theta^2-2\theta+1}{\theta} i^{2\theta-2} +  O\big(i^{2\theta-3}\big)\Big)\nonumber\\
&=  O\big(n^{2-2\theta}\big)+\Big(n^{2-2\theta} + \theta(1-\theta)n^{1-2\theta} + O\big(n^{-2\theta}\big)\Big) \Big( \frac{n+\theta-1}{n}\Big) \nonumber\\
&\qquad {} \times \Big( \frac{n^{2\theta}}{\theta^2}+ \frac{n^{2\theta-1}}{\theta} +\frac{2\theta^2-2\theta+1}{\theta(2\theta-1)} n^{2\theta-1} + O\big(n^{2\theta-2}\big) \Big) \nonumber\\
&=  O\big(n^{2-2\theta}\big)+\big(n^{2-2\theta} -(\theta-1)^2 n^{1-2\theta} + O\big(n^{-2\theta}\big)\big) \\
&\qquad {} \times \Big( \frac{n^{2\theta}}{\theta^2}+ \frac{2\theta}{2\theta-1}n^{2\theta-1} + O\big(n^{2\theta-2}\big) \Big) \nonumber\\
&= \frac{n^{2}}{\theta^2}+ \frac{5\theta^2-4\theta+1}{\theta^2(2\theta-1)}n+ O\big(1\big). \\
\end{align*}

We can now attain the variance as such:
\begin{align*}
\V \big[X_{n,1}^{(\theta)} \big] &=\E \big[\big(X_{n,1}^{(\theta)}\big)^2 \big] -\big(\E\big[X_{n,1}^{(\theta)} \big]\big)^2 \\
&=\frac{n^{2}}{\theta^2}+ \frac{5\theta^2-4\theta+1}{\theta^2(2\theta-1)}n+ O(1) - \Big( \frac{n}{\theta} +1+ O\big(n^{1-\theta} \big)\Big)^2\\
&=\frac{(\theta-1)^2}{\theta^2 (2\theta-1)}n +  O(1) \\
&\sim \frac{(\theta-1)^2}{\theta^2 (2\theta-1)}n .
\end{align*}

To solve for the covariance of $X_{n,1}^{(\theta)}$ and $X_{n,2}^{(\theta)}$, we need to first solve for  $\E\big[X_{n,1}^{(\theta)} X_{n,2}^{(\theta)}\big]$. We can do so through the following recurrence equation:
\begin{align*}
  \E \big[X_{n,1}^{(\theta)} X_{n,2}^{(\theta)} \big] 
&= \E \big[\big( X_{n-1,1}^{(\theta)}-Q_{n,1}^{(\theta)}+1 \big) \big( X_{n-1,2}^{(\theta)}-Q_{n,2}^{(\theta)}+Q_{n,1}^{(\theta)} \big) \big]  \\ 
&= \E \big[X_{n-1,1}^{(\theta)}X_{n-1,2}^{(\theta)}-Q_{n,1}^{(\theta)}X_{n-1,2}^{(\theta)} +X_{n-1,2}^{(\theta)}
-X_{n-1,1}^{(\theta)}Q_{n,2}^{(\theta)}  \\ 
&\qquad\qquad {} +Q_{n,1}^{(\theta)}Q_{n,2}^{(\theta)} -Q_{n,2}^{(\theta)} + X_{n-1,1}^{(\theta)} Q_{n,1}^{(\theta)}-\big(Q_{n,1}^{(\theta)}\big)^2 +Q_{n,1}^{(\theta)}  \big]\nonumber\\
&=  \E \big[ X_{n-1,1}^{(\theta)}X_{n-1,2}^{(\theta)}\big]- \E \big[Q_{n,1}^{(\theta)}X_{n-1,2}^{(\theta)}\big] +\E \big[X_{n-1,2}^{(\theta)}\big] \nonumber\\
&\qquad {} -\E \big[X_{n-1,1}^{(\theta)}Q_{n,2}^{(\theta)}\big] +\E \big[Q_{n,1}^{(\theta)}Q_{n,2}^{(\theta)}\big] -\E \big[Q_{n,2}^{(\theta)}\big] \nonumber\\
&\qquad {}+\E \big[X_{n-1,1}^{(\theta)} Q_{n,1}^{(\theta)}\big]-\E \big[\big(Q_{n,1}^{(\theta)}\big)^2\big] +\E \big[Q_{n,1}^{(\theta)}  \big] \nonumber \\ 
&=  \E \big[ X_{n-1,1}^{(\theta)}X_{n-1,2}^{(\theta)}\big]- \E \big[\E \big[Q_{n,1}^{(\theta)}X_{n-1,2}^{(\theta)}\given \field_{n-1} \big]\big] +\E \big[X_{n-1,2}^{(\theta)}\big]\big] \\
&\qquad {} -\E \big[\E \big[X_{n-1,1}^{(\theta)}Q_{n,2}^{(\theta)}\given \field_{n-1}\big]\big] +\E \big[\E \big[Q_{n,1}^{(\theta)}Q_{n,2}^{(\theta)}\given \field_{n-1}\big]\big] \\
& \qquad {} -\E \big[\E \big[Q_{n,2}^{(\theta)}\given \field_{n-1}\big]\big] +\E \big[\E \big[X_{n-1,1}^{(\theta)} 
Q_{n,1}^{(\theta)} \given \field_{n-1}\big]\big]\\
&\qquad {} -\E \big[\E \big[\big(Q_{n,1}^{(\theta)}\big)^2 \given \field_{n-1}\big]\big] +\E \big[\E \big[Q_{n,1}^{(\theta)} \given \field_{n-1} \big]\big] \\
&=  \E \big[ X_{n-1,1}^{(\theta)}X_{n-1,2}^{(\theta)}\big]- \frac{\theta-1}{n+\theta-1}\, \E \big[ X_{n-1,1}^{(\theta)}X_{n-1,2}^{(\theta)}\big] +\E \big[X_{n-1,2}^{(\theta)}\big] \\
&\qquad {}+\frac{\theta-1}{n+\theta-1}\, \E \big[ X_{n-1,1}^{(\theta)}X_{n-1,2}^{(\theta)}\big]  +\Big(\Big(\frac{\theta-1}{n+\theta-1}\Big)^2
\\&\qquad\qquad {} -\frac{(\theta-1)(n+\theta-1-(\theta-1)}{(n+\theta-1)^2 (n+\theta-1-1)}\Big)\E \big[ X_{n-1,1}^{(\theta)}
     X_{n-1,2}^{(\theta)}\big] \\
&\qquad {} - \frac{\theta-1}{n+\theta-1}\, 
    \E \big[X_{n-1,2}^{(\theta)}\big]+\frac{\theta-1}{n+\theta-1}\, \E \big[\big(X_{n-1,1}^{(\theta) }\big)^2\big]\\
&\qquad {}-\E \Big[\frac{X_{n-1}^{(\theta)}(\theta-1)(n+\theta-1-(\theta-1))(n+\theta-1-X_{n-1,1}^{(\theta)})}{(n+\theta-1)^2 (n+\theta-1-1)}  \\
&\qquad\qquad\qquad  {}+\Big(\frac{X_{n-1,1}(\theta -1)}{n+\theta-1}\Big)^2 \Big] +\frac{\theta-1}{n+\theta-1}\, \E \big[X_{n-1,1}^{(\theta) }\big] \\ 
&= \frac{n(n-1)}{(n+\theta-1)(n+\theta-2)}\, \E \big[ X_{n-1,1}^{(\theta)}X_{n-1,2}^{(\theta)}\big]\\
&\qquad {} +\frac{n}{n+\theta-1}\, \E \big[X_{n-1,2}^{(\theta)}\big]\\
&\qquad {}+ \frac{n(\theta-1)}{(n+\theta-1)(n+\theta-2)}\, \E \big[\big( X_{n-1,1}^{(\theta)}\big)^2\big] \\
&\qquad {}+ \frac{(\theta-1)(\theta-2)}{(n+\theta-1)(n+\theta-2)}\,  \E \big[ X_{n-1,1}^{(\theta)}\big]. \\
\end{align*}

Again, we will rely upon asymptotic equivalents of $\E \big[ X_{n,1}^{(\theta)}\big]$,  $\E \big[ X_{n,2}^{(\theta)}\big]$, and $\E \big[ \big( X_{n,1}^{(\theta)} \big)^2\big]$ to reduce the recursive equation to that of computable order. Plugging in the following relationships, we attain the following asymptotic relationship:
\begin{align*}
  \E \big[X_{n,1}^{(\theta)} X_{n,2}^{(\theta)} \big] &= \frac{n(n-1)}{(n+\theta-1)(n+\theta-2)}\,  \E \big[ X_{n-1,1}^{(\theta)}X_{n-1,2}^{(\theta)}\big] \\
&\qquad{} +\frac{n}{n+\theta-1}\,\Big(\frac{\theta-1}{\theta^2}(n-1+\theta) +O\Big(\frac{\ln(n)}{n^{\theta-1}} \Big) \Big) \nonumber\\
&\qquad{} + \frac{n(\theta-1)}{(n+\theta-1)(n+\theta-2)}\,  \Big(\frac{1}{\theta^2}(n-1)^2 \\
&\qquad{}+ \frac{5\theta^2-4\theta+1}{\theta^2(2\theta-1)}(n-1) \\
&\qquad{}+ \frac{10\theta^4-25\theta^3+29\theta^2-14\theta+2}{2\theta^2(2\theta-1)} +O\Big(\frac{1}{n}\Big)   \Big) \\
&\qquad{}+ \frac{(\theta-1)(\theta-2)}{(n+\theta-1)(n+\theta-2)}\,\Big(\frac{1}{\theta}(n-1) + 1 +O\big(n^{-\theta+1}\big)\Big) \\
&=\frac{n(n-1)}{(n+\theta-1)(n+\theta-2)}\,  \E \big[ X_{n-1,1}^{(\theta)}X_{n-1,2}^{(\theta)}\big] \\
&\qquad{}+\frac{2(\theta-1)}{\theta^2}n+ \frac{\theta-1}{2\theta-1}+O\Big(\frac{\ln(n)}{n}\Big).
\end{align*}

Take $g_n=\frac{n(n-1)}{(n+\theta-1)(n + \theta-2)}$, and
$h_n=\frac{2(\theta-1)}{\theta^2}n+ \frac{\theta-1}{2\theta-1}+O\big(\frac{\ln(n)}{n}\big)$ in~(\ref{Eq:standard}).
Noting that $\E \big[X_{0,1}^{(\theta)}X_{0,2}^{(\theta)}\big]=\theta(0)=0$,
 we can solve the preceding recurrence:
\begin{align*}
\E \big[ X_{n,1}^{(\theta)}X_{n,2}^{(\theta)}\big]&= 0\times
\Big(\prod_{i=1}^{n}\frac{i(i-1)}{(i+\theta-1)(i+\theta-2)}\Big) \\
&\qquad{} +  \sum_{i=1}^{n} \Big(\prod_{j=i+1}^{n} \frac{j(j-1)}{(j+\theta-1)(j+\theta-2)}\Big) \Big(\frac{2(\theta-1)}{\theta^2}i+ \frac{\theta-1}{2\theta-1} \\
&\qquad{} +O\Big(\frac{\ln(i)}{i}\Big) \Big) \\
&=\Big(\frac{\Gamma(n+1)}{\Gamma(n+\theta)} \Big)^2 \Big(\frac{n+\theta-1}{n}\Big)\sum_{i=1}^{n} \Big(\frac{\Gamma(i+\theta)}{\Gamma(i+1)} \Big)^2 \Big(\frac{i}{i+\theta-1} \Big) \\
&\qquad{} \times \Big(\frac{2(\theta-1)}{\theta^2}i+ \frac{\theta-1}{2\theta-1} +O\Big(\frac{\ln(i)}{i}\Big) \Big)\\
&=\big(n^{2-2\theta}+\theta(1-\theta)n^{1-2\theta}+ O\big(n^{-2\theta}\big) \big)\Big(\frac{n+\theta-1}{n}\Big) \\
& \qquad{} \times \sum_{i=1}^{n}\big( i^{2\theta-2}+(\theta^2-\theta)i^{2\theta-3} +O\big(i^{2\theta-4}\big)\big) \\
&\qquad{} \times \Big(\frac{2(\theta-1)}{\theta^2}i+ \frac{2-8\theta+9\theta^2-3\theta^3}{\theta^2(2\theta-1)} +O\Big(\frac{\ln(i)}{i}\Big) \Big)\\
&= \big(n^{2-2\theta}-(\theta-1)^2n^{1-2\theta}+ O\big(n^{-2\theta}\big) \big) \sum_{i=1}^{n}\Big(\frac{2(\theta-1)}{\theta^2}i^{2\theta-1} \\
&\qquad{} +\frac{4\theta^2-13\theta^3+17\theta^2-10\theta+2}{\theta^2(2\theta-1)}i^{2\theta-2} +O\big(i^{2\theta-3}\ln(i)\big)\Big) \\
&= \big(n^{2-2\theta}-(\theta-1)^2n^{1-2\theta}+ O\big(n^{-2\theta}\big) \big) \Big(\frac{(\theta-1)}{\theta^3}n^{2\theta} + \frac{\theta-1}{\theta^2}n^{2\theta-1}\nonumber\\
&\qquad{}+\frac{4\theta^2-13\theta^3+17\theta^2-10\theta+2}{\theta^2(2\theta-1)^2}n^{2\theta-1}+O\big(n^{2\theta-2} \ln(n)\big)\Big) \\
&= \big(n^{2-2\theta}-(\theta-1)^2n^{1-2\theta}+ O\big(n^{-2\theta}\big) \big)\\
& \quad{}\times \Big(\frac{\theta-1}{\theta^3}n^{2\theta} + \frac{4\theta^4-9\theta^3+9\theta^2-5\theta+1}{\theta^2 (2\theta-1)^2} n^{2\theta-1}+O\big(n^{2\theta-2} \ln(n)\big)\Big)\\
&= \Big(\frac{\theta-1}{\theta^3}\Big)n^2 +\frac{7\theta^4-16\theta^3+14\theta^2-6\theta+1}{\theta^3(2\theta-1)^2} n + O\big(\ln(n)\big).
\end{align*}

We can now attain the covariance:
\begin{align*}
\Cov \big[X_{n,1}^{(\theta)},X_{n,2}^{(\theta)} \big] &=\E \big[ X_{n,1}^{(\theta)}X_{n,2}^{(\theta)}\big] -\E\big[X_{n,1}^{(\theta)} \big]\E\big[X_{n,2}^{(\theta)} \big] \nonumber\\
&= \Big(\frac{\theta-1}{\theta^3}\Big)n^2 +\frac{7\theta^4-16\theta^3+14\theta^2-6\theta+1}{\theta^3(2\theta-1)^2}n + O\big(\ln(n)\big)\nonumber \\
&- \Big(\frac{n}{\theta}+1+O\big(n^{1-\theta}\big)\Big)\Big(\Big(\frac{\theta-1}{\theta^2}\Big)(n+\theta) +O\Big(\frac{\ln(n)}{n} \Big) \Big) \nonumber\\
&= -\frac{\theta^4-4\theta^2+4\theta-1}{\theta^3(2\theta-1)^2}n +O\big(\ln(n)\big) \\
&\sim  - \frac{(\theta-1)^2(\theta^2+2\theta-1)}
{\theta^3(2\theta-1)^2}\,n. 
\end{align*}

Finally, to solve for the variance of $X_{n,2}^{(\theta)}$, we need to solve for $\E [\big(X_{n,2}^{(\theta)} \big)^{2}]$. We can do so through the following recurrence equation:

\begin{align*}
\E \big[\big(X_{n,2}^{(\theta)}\big)^2 \big] &= \E \big[\big(X_{n-1,2}^{(\theta)}-Q_{n,2}^{(\theta)}+Q_{n,1}^{(\theta)}\big)^2  \big]  \\ 
&= \E \big[ \big(X_{n-1,2}^{(\theta)}\big)^2 + \big(Q_{n,2}^{(\theta)} \big)^2+\big(Q_{n,1}^{(\theta)}\big)^2 +2X_{n-1,2}^{(\theta)}Q_{n,1}^{(\theta)} \\
&  \qquad{} -2X_{n-1,2}^{(\theta)}Q_{n,2}^{(\theta)}   -2Q_{n,2}^{(\theta)}Q_{n,1}^{(\theta)}\big] \\
&= \E \big[ \big(X_{n-1,2}^{(\theta)}\big)^2 \big] +\E \big[\E \big[ \big(Q_{n,2}^{(\theta)} \big)^2 \given \field_{n-1}\big] \big]+\E \big[\E \big[\big(Q_{n,1}^{(\theta)}\big)^2 \given \field_{n-1}\big] \big] \\
&\qquad{}+\E \big[\E \big[2X_{n-1,2}^{(\theta)}Q_{n,1}^{(\theta)} \given\field_{n-1}\big]\big]-\E \big[\E \big[2X_{n-1,2}^{(\theta)}Q_{n,2}^{(\theta)} \given \field_{n-1}\big]\big]\\
&-\E \big[\E \big[2Q_{n,2}^{(\theta)}Q_{n,1}^{(\theta)} \given \field_{n-1}\big]\big] \nonumber\\
&= \E \big[ \big(X_{n-1,2}^{(\theta)}\big)^2 \big] \nonumber \\
&\quad{} +\E \Big[ \frac{X_{n-1,1}^{(\theta)}(\theta-1)(n)(n+\theta-1-X_{n-1,1}^{(\theta)})}{(n+\theta-1)^2 (n+\theta-2)} +\frac{\big(X_{n-1,1}^{(\theta)}\big)^2 (\theta -1)^2}{(n+\theta-1)^2} \Big] \nonumber\\
&\quad{}+\E \Big[ \frac{X_{n-1,2}^{(\theta)}(\theta-1)(n)(n+\theta-1-X_{n-1,2}^{(\theta)})}{(n+\theta-1)^2 (n+\theta-2)} +\frac{\big(X_{n-1,2}^{(\theta)}\big)^2 (\theta -1)^2}{(n+\theta-1)^2} \Big] \nonumber\\
&\quad{}+\frac{2(\theta-1)}{n+\theta-1}\, \E \big[X_{n-1,1}^{(\theta)}X_{n-1,2}^{(\theta)} \big] - \frac{2(\theta-1)}{n+\theta-1}\, \E \big[\big(X_{n-1,2}^{(\theta)}\big)^2 \big]\nonumber\\
&\quad{} -2\Big(\Big(\frac{\theta-1}{n+\theta-1}\Big)^2-\frac{(\theta-1)
n}
{(n+\theta-1)^2(n+\theta-2)}\Big)\E \big[X_{n-1,1}^{(\theta)}X_{n-1,2}^{(\theta)}  \big] \nonumber\\
&= \frac{n(n-1)}{(n+\theta-1)(n+\theta-2)}\, \E \big[ \big(X_{n-1,2}^{(\theta)}\big)^2 \big] \\
& \quad{}+\frac{(\theta-1)(\theta-2)}{(n+\theta-1)(n+\theta-2)}\, \E \big[ \big(X_{n-1,1}^{(\theta)}\big)^2 \big] \\
&\quad{}+\frac{2n(\theta-1)}{(n+\theta-1)(n+\theta-2)}\, \E \big[ X_{n-1,1}^{(\theta)}X_{n-1,2}^{(\theta)}\big]\\
&\quad{}+\frac{n(\theta-1)}{(n+\theta-1)(n+\theta-2)}\, \big(\E \big[ X_{n-1,1}^{(\theta)}\big]+\E \big[X_{n-1,2}^{(\theta)}\big]   \big). 
\end{align*}

Plugging in the following asymptotic relationships for $\E[ X_{n,1}^{(\theta)}]$, $\E[ X_{n,2}^{(\theta)}]$, $\E[ \big( X_{n,1}^{(\theta)} \big)^2]$, 
and $\E[ X_{n,1}^{(\theta)}X_{n,2}^{(\theta)}]$, we attain the following asymptotic recurrence:
\begin{align*}
\E \big[\big(X_{n,2}^{(\theta)}\big)^2 \big] &= \frac{n(n-1)}{(n+\theta-1)(n+\theta-2)}\,\E \big[ \big(X_{n-1,2}^{(\theta)}\big)^2 \big] \\
& \qquad{}+\frac{(\theta-1)(\theta-2)}{(n+\theta-1)(n+\theta-2)}\\ & \qquad \qquad {} \times\Big(\frac{1}{\theta^2}(n-1)^2 + \frac{5\theta^2-4\theta+1}{\theta^2(2\theta-1)}(n-1) +O\big(1 \big) \Big) \\
&\qquad{}+\Big(\frac{2n(\theta-1)}{(n+\theta-1)(n+\theta-2)}\Big)\Big(\frac{\theta-1}{\theta^3}(n-1)^2  \\
&\qquad\qquad \qquad{}+\frac{1-6\theta+14\theta^2-16\theta^3+7\theta^4}{(1-2\theta)^2\theta^3}\, (n-1)+ O\big(\ln(n)\big)\Big)\\
&\qquad{}+\frac{(\theta-1)n}{(n+\theta-1)(n+\theta-2)}\,\Big(\frac{1}{\theta}(n-1) + 1 +O\big(n^{-\theta+1}\big) \\
&\qquad\qquad\qquad{}+\Big(\frac{\theta-1}{\theta^2}(n-1+\theta) +O\Big(\frac{\ln(n)}{n^{\theta-1}} \Big)\Big)   \Big) \\
&=\Big(\frac{n(n-1)}{(n+\theta-1)(n+\theta-2)}\Big)\E \big[ \big(X_{n-1,2}^{(\theta)}\big)^2 \big] \\
& \qquad{}+\frac{2(\theta-1)^2}{\theta^3}n + \frac{(10\theta^3-16\theta^2+7\theta-1)(\theta-1)}{\theta^2(2\theta-1)^2}+O\Big(\frac{\ln(n)}{n} \Big)\\
\end{align*}

Take 
$g_n=\frac{n(n-1)}{(n+\theta-1)(n + \theta-2)}$, and
$h_n=\frac{2(\theta-1)^2}{\theta^3}n + \frac{(10\theta^3-16\theta^2+7\theta-1)(\theta-1)}{\theta^2(2\theta-1)^2}+O\big(\frac{\ln(n)}{n} \big)$ in~(\ref{Eq:standard}).
Noting that $\E[\big(X_{0,2}^{(\theta)}\big)^2]=0$,
we can solve the preceding recurrence:
\begin{align*}
\E \big[\big(X_{n,2}^{(\theta)}\big)^2 \big] &=  0
\times\prod_{i=1}^{n}\frac{i(i-1)}{(i+\theta-1)(i+\theta-2)} \\
&\qquad{} +  \sum_{i=1}^{n} \Big(\prod_{j=i+1}^{n} \frac{j(j-1)}{(j+\theta-1)(j+\theta-2)}\Big) \Big(\frac{2(\theta-1)^2}{\theta^3}\, i \\
&\qquad\qquad\qquad+ \frac{(10\theta^3-16\theta^2+7\theta-1)(\theta-1)}{\theta^2(2\theta-1)^2}+O\Big(\frac{\ln(i)}{i} \Big) \Big) \\
&= \Big(\frac{\Gamma(n+1)}{\Gamma(n+\theta)} \Big)^2 \Big(\frac{n+\theta-1}{n}\Big)\sum_{i=1}^n \Big(\frac{\Gamma(i+\theta)}{\Gamma(i+1)} \Big)^2 \Big(\frac{i}{i+\theta-1} \Big) \nonumber\\
&\qquad{} \times  \Big(\frac{2(\theta-1)^2}{\theta^3}i+ \frac{(10\theta^3-16\theta^2+7\theta-1)(\theta-1)}{\theta^2(2\theta-1)^2}+O\Big(\frac{\ln(i)}{i} \Big) \Big) \nonumber\\
&= \big(n^{2-2\theta}-(\theta-1)^2 n^{1-2\theta}+ O\big(n^{-2\theta}\big) \big) \\
& \qquad{} \times \sum_{i=1}^{n}\big( i^{2\theta-2}+(\theta^2-\theta)i^{2\theta-3} +O\big(i^{2\theta-4}\big)\big) \nonumber\\
&\qquad{} \times  \Big(\frac{2(\theta-1)^2}{\theta^3}i+ \frac{2\theta^5+6\theta^4-27\theta^3+30\theta^2-13\theta+2}{\theta^3(2\theta-1)^2}+O\Big(\frac{\ln(i)}{i} \Big) \Big) \nonumber\\
&= \big(n^{2-2\theta}-(\theta-1)^2 n^{1-2\theta}+ O\big(n^{-2\theta}\big) \big) \\
&  \qquad{} \times\frac{(\theta-1)^2}{\theta^3} \sum_{i=1}^{n}\Big(2i^{2\theta-1}+\frac{8\theta^4-14\theta^3+20\theta^2-11\theta+2}{(2\theta-1)^2}\, i^{2\theta-2} \\
&\qquad\qquad\qquad\qquad\qquad+O\Big(\frac{\ln(i)}{i} \Big) \Big) \\
&= \big(n^{2-2\theta}-(\theta-1)^2 n^{1-2\theta}+ O\big(n^{-2\theta}\big) \big)\\
& \qquad{} \times  \frac{(\theta-1)^2}{\theta^3} \Big(\frac{n^{2\theta}}{\theta} + n^{2\theta-1}+\frac{8\theta^4-14\theta^3+20\theta^2-11\theta+2}{(2\theta-1)^3} n^{2\theta-1} \\
& \qquad\qquad{}+O\big(n^{2\theta-3}\ln(n) \big)\Big) \\
&=  \frac{(\theta-1)^2}{\theta^3} \big(n^{2-2\theta}- (\theta-1)^2 n^{1-2\theta} + O\big(n^{-2\theta}\big)\big)  \\
& \qquad {}\times \Big( \frac{1}{\theta}\, n^{2\theta} +\frac{8\theta^4 -6\theta^3+8\theta^2-5\theta+1}{(2\theta-1)^3}\, n^{2\theta-1} +O\big(n^{2\theta-2}\ln(n) \big)\Big) \\
&= \frac{(\theta-1)^2}{\theta^4} n^2 + \frac{(22\theta^4-30\theta^3+20\theta^2-7\theta+1) (\theta-1)^2}{\theta^4(2\theta-1)^3}n+ O\big(\ln(n)\big).
\end{align*}

We can now attain the variance as such:
\begin{align*}
\V \big[X_{n,2}^{(\theta)} \big] &=\E \big[\big(X_{n,2}^{(\theta)}\big)^2 \big] -\big(\E\big[X_{n,2}^{(\theta)} \big]\big)^2\\
&= \frac{(\theta-1)^2}{\theta^4}\, n^2 + \frac{(22\theta^4-30\theta^3+20\theta^2-7\theta+1) (\theta-1)^2}{\theta^4(2\theta-1)^3}\, n\\   
&\qquad {}+ O(\ln (n)) - \Big(\frac{\theta-1}{\theta^2}(n+\theta) +O\Big(\frac{\ln (n)} {n^{\theta-1}} \Big) \Big)^2 \\
&= \Big(\frac{(22\theta^4-30\theta^3+20\theta^2-7\theta+1) (\theta-1)^2}{\theta^4(2\theta-1)^3}-\frac{2(\theta-1)^2}{\theta^3}\Big)\, n \\
&\qquad {} +O(\ln (n)) \\
&= \frac{(\theta-1)^2 (6\theta^4-6\theta^3+8\theta^2-5\theta+1)}{\theta^4(2\theta-1)^3}\, n+O(\ln (n))\\
&\sim \frac{(\theta-1)^2 (6\theta^4-6\theta^3+8\theta^2-5\theta+1)}{\theta^4(2\theta-1)^3}\, n.
\end{align*}
\endgroup

\begin{thebibliography}{99}
\bibitem {Barton} David, F.\ and Barton, E.\ (1962).     
         {\em Combinatorial Chance}. 
         Charles Griffin, London.
\bibitem{Drmota}
         Drmota, M.\ (2009).
        {\em Random Trees: An Interplay Between Combinatorics and Probability},    
         Springer, New York.
\bibitem{Graham}
         Graham, R., Knuth, D.\ and Patashnik, O.\ (1994).
            {\em Concrete Mathematics}.
         Addison-Wesley, Reading, Massachusetts.
\bibitem {Hall}
Hall, P. and Heyde, C.\ (1980).
{\em Martingale Limit Theory and Its Application}.
Academic Press, Inc., New York.
\bibitem{Hofri}
         Hofri, M.\ and Mahmoud, H.\ (2018).
         {\em Algorithmics of Nonuniformity: Tools and Paradigms}, 
         CRC Press, Boca Raton, Florida.
\bibitem {QQ}       
        Feng, Q., Mahmoud, H. and Panholzer, A.\ (2008). 
        Phase changes in subtree varieties in random recursive trees 
        and binary search trees. 
        {\it SIAM Journal on Discrete Mathematics} {\bf 22}, 160--184.
\bibitem{Karonski}
   Frieze, A.\ and Karo\' nski (2015).
   {\em Introduction to Random Graphs}.
   Cambridge University Press.
\bibitem{Kendall}  
         Kendall, M., Stuart , A.\ and Ord, K.\ ( 1987). 
         {\em Advanced Theory of Statistics, Vol. I: Distribution Theory}. 
         Oxford University Press.
\bibitem{Mah2019}
Mahmoud, H.\ (2019). 
         Local and global degree profiles of randomly grown self-similar 
         hooking networks under uniform and preferential attachment, 
          {\em Advances in Applied Mathematics} {\bf 111}, 101930.
\bibitem{MahmoudSmythe012}
            Mahmoud, H. and Smythe, R. (1992). 
            Asymptotic joint normality of outdegrees of nodes in 
            random recursive trees. 
            {\em Random Structures and Algorithms} {\bf 3}, 255--266.
\bibitem{Slutsky}
           Slutsky, E.\ (1925). Uber stochastische asymptoten und grenzwerte.
           Metron {\bf5
           }, 3--89.
\bibitem {Smythe} 
Smythe, R. and Mahmoud, H.\ (1996).
A survey of recursive trees.
{\em Theory of Probability and Mathematical Statistics}
{\bf 51}, 1--29 (appeared in Ukrainian in (1994)).
\bibitem{Tricomi}
       Tricomi, F. 
       and Erd\'elyi, A.\ (1951). 
       The asymptotic expansion of a ratio of gamma functions.
       {\em Pacific Journal of Mathematics} {\bf 1}, 133--142.
\bibitem{Williams}
       Williams, D.\ (1991). 
       {\em Probability with Martingales}. Cambridge University Press,     
       New York.
\end{thebibliography}
\end{document}